\newtheorem{theorem}{Theorem}[section]
\newtheorem{lemma}[theorem]{Lemma}
\newtheorem{corollary}[theorem]{Corollary}
\numberwithin{figure}{section}
\theoremstyle{definition}
\theoremstyle{remark}
\newtheorem{remark}[theorem]{Remark}
\numberwithin{equation}{section}
\newcommand{\Om} {\Omega}
	\DeclareMathOperator{\loc}{loc}
\newcommand\R{\mathbb R}
\begin{document}

\title[On the Neumann $(p,q)$-eigenvalue problem in H\"older singular domains]
{On the Neumann $(p,q)$-eigenvalue problem in H\"older singular domains}

\author{Prashanta Garain, Valerii Pchelintsev, Alexander Ukhlov}

\begin{abstract}
In the article we study the Neumann $(p,q)$-eigenvalue problems
in bounded H\"older $\gamma$-singular domains $\Omega_{\gamma}\subset \mathbb{R}^n$. In the case $1<p<\infty$ and $1<q<p^{*}_{\gamma}$ we prove
solvability of this eigenvalue problem and existence of the minimizer of the
associated variational problem. In addition, we establish some regularity results of the eigenfunctions and some estimates of $(p,q)$-eigenvalues.
\end{abstract}
\maketitle
\footnotetext{\textbf{Key words and phrases:} $p$-Laplacian, Neumann eigenvalue problem, existence, regularity, quasiconformal mappings.}
\footnotetext{\textbf{2020
Mathematics Subject Classification:} 35P15, 35P30, 35A01, 35J92, 46E35, 30C65.}

\section{Introduction}

In this article, we investigate the following Neumann $(p,q)$-eigenvalue problem:
\begin{equation}\label{maineqn}
-\Delta_p u:=-\text{div}(|\nabla u|^{p-2}\nabla u)=\lambda\|u\|_{L^q(\Omega_{\gamma})}^{p-q}|u|^{q-2}u\text{ in }\Omega_{\gamma},\quad\frac{\partial u}{\partial\nu}=0\text{ on }\partial \Omega_{\gamma},
\end{equation}
in bounded domains with anisotropic H\"older $\gamma$-singularities
$$
\Omega_{\gamma}=\{ x=(x_1,x_2,\ldots,x_n)\in\mathbb R^n : 0<x_n<1, 0<x_i<g_i(x_n),\,i=1,2,\dots,n-1\},
$$
where $g_i(t)=t^{\gamma_i}$, $\gamma_i\geq 1$, $0< t< 1$, are H\"older functions and we denote by $\gamma={\log (g_1(t)\cdot ... \cdot g_{n-1}(t))}{(\log t)}^{-1}+1$.
It is evident that $\gamma\geq n$. In the case $g_1=g_2=\dots=g_{n-1}$ we will say that domain $\Omega_{\gamma}$ is a domain with $\sigma$-H\"older singularity, $\sigma=(\gamma-1)/(n-1)$.
Note that the main results of the article include the case of Lipschitz domains $\Omega\subset\mathbb R^n$, in this case we can put $\gamma=n$.
We assume that $1<p<\gamma$ and $1<q<p^*_{\gamma}$, where $p^*_{\gamma}={\gamma p}/{(\gamma -p)}$.

The associated Dirichlet eigenvalue problem of \eqref{maineqn}, i.e.,
\begin{equation}\label{Deqn}
-\Delta_p u=\lambda\|u\|_{L^q(\Om)}^{p-q}|u|^{q-2}u\text{ in }\Om,\quad u=0\text{ on }\partial\Om,
\end{equation}
is widely studied in the literature concerning the existence, regularity among other qualitative properties of eigenfunctions for $p=q$ as well as for $p\neq q$. When $p=q$ in \eqref{Deqn}, we refer to Lindqvist \cite{Lind92}, Garc\'{\i}a Azorero-Peral Alonso \cite{GPe}, Anane-Tousli \cite{AnT, An}, Dr\'abek-Kufner-Nicolosi \cite{Drabek} and the references therein. When $p\neq q$ in \eqref{Deqn}, we refer to Franzina-Lamberti \cite{Franzina}, Ercole \cite{Ercole}, Garain-Ukhlov \cite{GUsub} and the references therein.

In contrast to the Dirichlet problem \eqref{Deqn}, the Neumann problem \eqref{maineqn} is less understood in the literature. In this concern, for $p=q$ in \eqref{maineqn}, i.e., for the equation
\begin{equation}\label{Npeqn}
-\Delta_p u=\lambda|u|^{p-2}u\text{ in }\Om,\quad\frac{\partial u}{\partial\nu}=0\text{ on }\partial\Om,
\end{equation}
in Lipschitz domains $\Omega\subset\mathbb R^n$ we refer to Greco-Lucia \cite{Greco} for the linear case $p=q=2$. In the nonlinear setting, L\^{e} \cite{Anle} proved existence, regularity among other qualitative properties of the eigenfunctions of \eqref{Npeqn}. For $p\neq q$ in \eqref{maineqn} under the hypothesis that $\Omega=B_1\cup B_2$ for two disjoint balls $B_1$ and $B_2$, Croce-Henrot-Pisante \cite{Croce} proved existence of a bounded eigenfunction of \eqref{maineqn} in the space $W_{\text{per}}^{1,p}(\Omega)$, which stands for the Sobolev space of functions in $W^{1,p}(\Omega)$ taking constant boundary values, see also Nazarov \cite{Nazarov} for related results.

Estimates of Neumann eigenvalues of the $p$-Laplace operator in  non-convex domains is a long-standing complicated problem \cite{ENT,PW,PS51}. This problem was partially solved on the base of composition operators on Sobolev spaces, see for example, \cite{GPU18,GU16,GU17}. In this article we prove that the first non-trivial Neumann $(p,q)$-eigenvalues can be characterized by the Min-Max Principle
$$
\lambda_{p,q}(\Omega_{\gamma})\\=\inf \left\{\frac{\|\nabla u\|_{L^p(\Omega_{\gamma})}^p}{\|u\|_{L^q(\Omega_{\gamma})}^p} :
u \in W^{1,p}(\Omega_{\gamma}) \setminus \{0\}, \int_{\Omega_{\gamma}} |u|^{q-2}u~dx=0 \right\}.
$$

By using this characterization we give lower estimates of Neumann $(p,q)$-eigen\-va\-lu\-es in bounded H\"older singular domains $\Omega_{\gamma}\subset\mathbb R^n$. In particular, we give the following lower estimate of the first nontrivial Neumann $(p,q)$-eigenvalue
\[
{\lambda_{3,2}(\Omega_{\gamma})} \geq \left(12\pi\sqrt{(\gamma_1-1)^2+(\gamma_2-1)^2+3}\right)^{-3},
\]
where $\Omega_{\gamma}\subset\mathbb R^3$ is a domain with anisotropic H\"older $\gamma$-singularities, $\gamma=\gamma_1+\gamma_2+1$, $3<\gamma<5$.

So, this example gives, in particular, in the frameworks the conjecture by V.~Maz'\-ya \cite{M}, that exact Poincar\'e--Sobolev constants in anisotropic H\"older singular domains depends not on $\gamma=\gamma_1+...+\gamma_{n-1}+1$ only.

In addition, in any bounded H\"older singular domain $\Omega_{\gamma}$, for $q=2$, we prove existence of eigenfunction of \eqref{maineqn} in $W^{1,p}(\Omega_{\gamma})$ (see Theorems \ref{newthm}-\ref{subopthm1}) with zero mean, but not necessarily takes the constant boundary values as in \cite{Croce}. Further, we study the associated minimizing problem of \eqref{maineqn} (see Theorem \ref{subopthm1}) and prove the boundedness of the eigenfunctions among other qualitative properties (see Theorem \ref{regthm1}). To this end, we follow the approach from Ercole \cite{Ercole}.

This article is organized as follows: In Section $2$, we prove Min-Max Principle for the first non-trivial Neumann $(p,q)$-eigenvalue.
In Section $3$, we give estimates of Neumann $(p,q)$-eigenvalues in H\"older singular domains.
Finally, in Section $4$, we mention the functional setting related to the problem \eqref{maineqn} and further state and prove our regularity results.

\section{Neumann eigenvalue problem}

\subsection{Sobolev spaces}
Let $\Omega\subset\mathbb R^n$ be an open set. Then the Sobolev space $W^{1,p}(\Omega)$, $1\leq p\leq\infty$,
is defined as a Banach space of locally integrable weakly differentiable functions
$u:\Omega\to\mathbb{R}$ equipped with the following norm:
\[
\\\|u\|_{W^{1,p}(\Omega)}:=\|u\|_{L^p(\Omega)}+\|\nabla u\|_{L^p(\Omega)},
\]
where $L^p(\Omega)$ is the Lebesgue space with the standard norm.
In accordance with the non-linear potential theory \cite{HKM,MH72} we consider elements of Sobolev spaces $W^{1,p}(\Omega)$ as equivalence classes up to a set of $p$-capacity zero  \cite{M}.

The Sobolev space $W^{1,p}_{\loc}(\Omega)$ is defined as follows: $u\in W^{1,p}_{\loc}(\Omega)$
if and only if $u\in W^{1,p}(U)$ for every open and bounded set $U\subset  \Omega$ such that
$\overline{U}  \subset \Omega$, where $\overline{U} $ is the closure of the set $U$.

We consider domains with anisotropic H\"older $\gamma$-sin\-gu\-larities
$$
\Omega_{\gamma}=\{ x=(x_1,x_2,\cdots,x_n)\in\mathbb R^n : 0<x_n<1, 0<x_i<g_i(x_n),\,i=1,2,\dots,n-1\},
$$
where $g_i(t)=t^{\gamma_i}$, $\gamma_i\geq 1$, $0< t< 1$ are H\"older functions and we denote by $\gamma={\log (g_1(t)\cdot ... \cdot g_{n-1}(t))}{(\log t)}^{-1}+1$.

Let us recall the Sobolev embedding theorem in H\"older singular domains \cite{GG94,GU09}.

\begin{theorem}\label{thmemb}
Let $\Omega_{\gamma}\subset\mathbb R^n$ be a domain with anisotropic H\"older $\gamma$-sin\-gu\-larities. Suppose $1<p<\gamma$. Then the embedding operator
$$
W^{1,p}(\Omega_{\gamma})\hookrightarrow L^{r}(\Omega_{\gamma})
$$
is compact for any $1<r<p_{\gamma}^*$, where $p_{\gamma}^*={\gamma p}/{(\gamma -p)}$.
\end{theorem}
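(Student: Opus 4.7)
The plan is to reduce the subcritical compactness statement to the borderline continuous embedding $W^{1,p}(\Omega_\gamma)\hookrightarrow L^{p_\gamma^*}(\Omega_\gamma)$ combined with the classical Rellich--Kondrachov theorem applied on the Lipschitz part of $\Omega_\gamma$. The continuous critical embedding is the deep analytic input, and it is what the references \cite{GG94,GU09} provide; concretely it can be obtained from the composition--operator framework by introducing the change of variables $\varphi:(0,1)^n\to\Omega_\gamma$, $\varphi(y)=(y_1 g_1(y_n),\dots,y_{n-1}g_{n-1}(y_n),y_n)$, whose Jacobian equals $g_1(y_n)\cdots g_{n-1}(y_n)=y_n^{\gamma-1}$. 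This map pulls $W^{1,p}(\Omega_\gamma)$ back to a weighted Sobolev space on the cube, and a one-dimensional integration along the $x_n$-axis combined with the standard Sobolev--Poincar\'e inequality on the transverse slices yields the exponent $p_\gamma^*$.

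For the compactness itself, I fix $1<r<p_\gamma^*$ and a bounded sequence $\{u_k\}\subset W^{1,p}(\Omega_\gamma)$. For $\epsilon\in(0,1)$ I decompose $\Omega_\gamma=\Omega_\gamma^\epsilon\cup\widetilde\Omega_\gamma^\epsilon$, where $\Omega_\gamma^\epsilon=\Omega_\gamma\cap\{x_n>\epsilon\}$ is a bounded (smooth, hence Lipschitz) domain and $\widetilde\Omega_\gamma^\epsilon=\Omega_\gamma\cap\{x_n\le\epsilon\}$ is the cuspidal remainder, whose Lebesgue measure equals
\[
|\widetilde\Omega_\gamma^\epsilon|=\int_0^\epsilon g_1(t)\cdots g_{n-1}(t)\,dt=\int_0^\epsilon t^{\gamma-1}\,dt=\frac{\epsilon^\gamma}{\gamma}.
\]
On $\Omega_\gamma^\epsilon$ the classical Rellich--Kondrachov theorem furnishes, for each fixed $\epsilon$, a subsequence converging in $L^r(\Omega_\gamma^\epsilon)$. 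On the cuspidal piece, H\"older's inequality together with the critical embedding yields
\[
\|u_k\|_{L^r(\widetilde\Omega_\gamma^\epsilon)}\le|\widetilde\Omega_\gamma^\epsilon|^{\frac1r-\frac1{p_\gamma^*}}\|u_k\|_{L^{p_\gamma^*}(\widetilde\Omega_\gamma^\epsilon)}\le C\,\epsilon^{\gamma(\frac1r-\frac1{p_\gamma^*})}\|u_k\|_{W^{1,p}(\Omega_\gamma)},
\]
and since $r<p_\gamma^*$ the exponent of $\epsilon$ is strictly positive, so this contribution is bounded uniformly in $k$ and tends to $0$ as $\epsilon\to 0$.

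A standard diagonal extraction along $\epsilon_j=1/j$ then produces a subsequence of $\{u_k\}$ that is Cauchy in $L^r(\Omega_\gamma)$: given $\delta>0$, pick $j$ so large that the cusp contribution is less than $\delta/2$ for every term, and then use the $L^r(\Omega_\gamma^{1/j})$-convergence on the Lipschitz part to absorb the rest. The main obstacle is Step~1, the borderline continuous embedding into $L^{p_\gamma^*}(\Omega_\gamma)$: this is precisely the place where the anisotropic H\"older geometry enters and forces the intrinsic dimension $\gamma\ge n$ (rather than $n$) to govern the Sobolev exponent, and it is the content of \cite{GG94,GU09}. Once this embedding is available, the subcritical compactness argument via the cusp decomposition above is essentially routine.
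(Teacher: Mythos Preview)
The paper does not give its own proof of this theorem: it is stated as a known result, prefaced by ``Let us recall the Sobolev embedding theorem in H\"older singular domains \cite{GG94,GU09}'', and is quoted verbatim from those references. So there is no in-paper argument to compare your proposal against.

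That said, your reduction is sound and is essentially the standard route from a continuous borderline embedding to subcritical compactness. A couple of remarks. First, the truncated domain $\Omega_\gamma^\epsilon$ is Lipschitz but not smooth (it has corners where the faces $x_i=0$, $x_i=x_n^{\gamma_i}$, $x_n=\epsilon$, $x_n=1$ meet); this does not affect Rellich--Kondrachov but the parenthetical ``smooth, hence Lipschitz'' should be dropped. Second, note that since $\gamma\ge n$ one has $p_\gamma^*\le np/(n-p)$ when $p<n$, so the exponent $r<p_\gamma^*$ is always in the admissible Rellich--Kondrachov range on the Lipschitz piece; for $p\ge n$ the compact embedding on $\Omega_\gamma^\epsilon$ into any $L^r$ is classical anyway. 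Third, your identification of the continuous embedding $W^{1,p}(\Omega_\gamma)\hookrightarrow L^{p_\gamma^*}(\Omega_\gamma)$ as the substantive input from \cite{GG94,GU09} is accurate; those works in fact establish both the continuous endpoint embedding and the compact subcritical one, so in citing them the paper is invoking the full statement directly rather than deriving compactness from continuity as you do.
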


Throughout Sections 2-3, we assume that $1<p<\gamma$ and $1<q<p_{\gamma}^*$ unless otherwise mentioned.
Now for $\lambda\in \R$, we consider the following Neumann $(p,q)$-eigenvalue problem
\begin{equation}\label{meqn}
-\Delta_p u:=-\text{div}(|\nabla u|^{p-2}\nabla u)=\lambda\|u\|_{L^{q}(\Omega_{\gamma})}^{p-q}|u|^{q-2}u\text{ in }\Omega_{\gamma},\quad\frac{\partial u}{\partial\nu}=0\text{ on }\partial \Omega_{\gamma},
\end{equation}
where $\Omega_{\gamma}\subset\mathbb{R}^n$ and $\nu$ is the outward unit normal to $\partial \Omega_{\gamma}$ in the weak formulation:

\vskip 0.2cm

\noindent
{\it The pair $(\lambda,u)\in \R\times W^{1,p}(\Omega_{\gamma})\setminus\{0\}$ is an eigenpair of \eqref{maineqn} if for every $\phi\in W^{1,p}(\Omega_{\gamma})$, we have
\begin{equation}\label{mwksol}
\begin{split}
\int_{\Omega_{\gamma}}|\nabla u|^{p-2}\nabla u\nabla\phi\,dx=\lambda\|u\|_{L^{q}(\Omega_{\gamma})}^{p-q}\int_{\Omega_{\gamma}}|u|^{q-2}u\phi\,dx.
\end{split}
\end{equation}
We refer to $\lambda$ as an eigenvalue and $u$ as the eigenfunction corresponding to $\lambda$.}

\vskip 0.2cm

Now we prove the Min-Max Principle for the first non-trivial Neumann $(p,q)$-eigenvalue.
We establish Theorem \ref{minthm} following the proof of \cite[Lemma 2]{Croce}. To this end, first we obtain the following auxiliary result.
\begin{lemma}\label{lem1}
Let $v\in W^{1,p}(\Omega_{\gamma})\setminus\{0\}$ be such that $\int_{\Omega_{\gamma}}|v|^{q-2}v\,dx=0$. Then there exists a constant $C=C(\Omega_{\gamma})>0$ such that
$$
\|v\|_{L^p(\Omega_{\gamma})}\leq C\|\nabla v\|_{L^p(\Omega_{\gamma})}.
$$
\end{lemma}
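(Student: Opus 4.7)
The plan is to argue by contradiction using the compact Sobolev embedding for H\"older singular domains (Theorem \ref{thmemb}). Suppose no such constant $C$ exists. Then for every $k\in\mathbb{N}$ there is a function $v_k\in W^{1,p}(\Omega_\gamma)\setminus\{0\}$ with $\int_{\Omega_\gamma}|v_k|^{q-2}v_k\,dx=0$ and $\|v_k\|_{L^p(\Omega_\gamma)}>k\,\|\nabla v_k\|_{L^p(\Omega_\gamma)}$. After normalizing $w_k:=v_k/\|v_k\|_{L^p(\Omega_\gamma)}$ one has $\|w_k\|_{L^p(\Omega_\gamma)}=1$, $\|\nabla w_k\|_{L^p(\Omega_\gamma)}<1/k\to 0$, and the integral constraint is preserved by homogeneity.

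Next I would invoke the compact embedding. Since $1<p<\gamma$ we have $p<p^*_\gamma$, and by hypothesis $1<q<p^*_\gamma$, so Theorem \ref{thmemb} gives compact embeddings $W^{1,p}(\Omega_\gamma)\hookrightarrow L^p(\Omega_\gamma)$ and $W^{1,p}(\Omega_\gamma)\hookrightarrow L^q(\Omega_\gamma)$. The sequence $\{w_k\}$ is bounded in $W^{1,p}(\Omega_\gamma)$, so after passing to a subsequence $w_k\to w$ strongly in $L^p(\Omega_\gamma)$ and in $L^q(\Omega_\gamma)$, and $w_k\rightharpoonup w$ weakly in $W^{1,p}(\Omega_\gamma)$. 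Weak lower semicontinuity of the gradient seminorm gives
\[
\|\nabla w\|_{L^p(\Omega_\gamma)}\leq \liminf_{k\to\infty}\|\nabla w_k\|_{L^p(\Omega_\gamma)}=0,
\]
so $\nabla w=0$ a.e. Since $\Omega_\gamma$ is connected, $w$ must be constant.

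The remaining task is to pass to the limit in the nonlinear constraint $\int_{\Omega_\gamma}|w_k|^{q-2}w_k\,dx=0$. Strong convergence $w_k\to w$ in $L^q(\Omega_\gamma)$ implies (up to a further subsequence) pointwise a.e.\ convergence together with uniform $L^q$-bounds, so the nonlinear terms $|w_k|^{q-2}w_k$ are uniformly integrable (being bounded in $L^{q/(q-1)}(\Omega_\gamma)$ on a bounded domain). Vitali's convergence theorem then yields $|w_k|^{q-2}w_k\to |w|^{q-2}w$ in $L^1(\Omega_\gamma)$, so in the limit $|w|^{q-2}w\cdot|\Omega_\gamma|=0$, forcing the constant $w$ to be zero. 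But $\|w\|_{L^p(\Omega_\gamma)}=\lim_k\|w_k\|_{L^p(\Omega_\gamma)}=1$, a contradiction.

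The main technical point is the compact embedding $W^{1,p}(\Omega_\gamma)\hookrightarrow L^p(\Omega_\gamma)$ in the H\"older singular setting, which is not classical Rellich--Kondrachov but is precisely what Theorem \ref{thmemb} provides under $1<p<\gamma$; the passage to the limit in the nonlinear constraint is routine via Vitali, and connectedness of $\Omega_\gamma$ (clear from its explicit description) is used implicitly to conclude that a weakly differentiable function with vanishing gradient is constant.
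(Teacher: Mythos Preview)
Your proof is correct and follows essentially the same contradiction argument as the paper: normalize, use the compact embedding of Theorem~\ref{thmemb} to extract a convergent subsequence, conclude the limit has zero gradient hence is constant, pass the nonlinear constraint to the limit to force the constant to be zero, and contradict $\|w\|_{L^p}=1$. The only cosmetic difference is that you justify the limit in $\int |w_k|^{q-2}w_k\,dx$ via Vitali's theorem (boundedness in $L^{q/(q-1)}$), whereas the paper extracts an $L^q$-dominating function along the subsequence and applies Lebesgue's dominated convergence theorem; both are routine and equivalent here.
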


\begin{proof}
By contradiction, suppose for every $n\in\mathbb{N}$, there exists $v_n\in W^{1,p}(\Omega_{\gamma})\setminus\{0\}$ such that $\int_{\Omega_{\gamma}}|v_n|^{q-2}v_n\,dx=0$ and
\begin{equation}\label{cn}
\|v_n\|_{L^p(\Omega_{\gamma})}>n\|\nabla v_n\|_{L^p(\Omega_{\gamma})}.
\end{equation}
Without loss of generality, let us assume that $\|v_n\|_{L^p(\Omega_{\gamma})}=1$. If not, we define
$$
u_n=\frac{v_n}{\|v_n\|_{L^p(\Omega_{\gamma})}},
$$
then $\|u_n\|_{L^p(\Omega_{\gamma})}=1$ and \eqref{cn} holds for $u_n$ and also $\int_{\Omega_{\gamma}}|u_n|^{q-2}u_n\,dx=0$. By \eqref{cn}, since $\|\nabla v_n\|_{L^p(\Omega_{\gamma})}\to 0$ as $n\to\infty$, we have that the sequence of functions $\{v_n\}, n\in\mathbb{N}$, is uniformly bounded in $W^{1,p}(\Omega_{\gamma})$. By Theorem \ref{thmemb}, since the embedding operator
$$
W^{1,p}(\Omega_{\gamma})\hookrightarrow L^q(\Omega_{\gamma}),\quad 1<q<p_{\gamma}^*,
$$
is compact, there exists $v\in W^{1,p}(\Omega_{\gamma})$ such that
$$
v_n{\rightharpoonup} v\text{ weakly\,in }W^{1,p}(\Omega_{\gamma}),\quad v_n\to v\text{ strongly\,in }L^q(\Omega_{\gamma}),\quad\forall \,1<q<p_{\gamma}^*,
$$
and $g\in L^q(\Omega_{\gamma})$ such that
$$
|v_n|\leq g\text{ a.e.\, in }\Omega_{\gamma},\quad \nabla v_n{\rightharpoonup} \nabla v\text{ weakly\,in } L^p(\Omega_{\gamma}).
$$
Since $\|\nabla v_n\|_{L^p(\Omega_{\gamma})}\to 0$ as $n\to\infty$, we have $\nabla v_n{\rightharpoonup} 0$ weakly in $L^p(\Omega_{\gamma})$, hence $\nabla v=0$ a.e. in $\Omega_{\gamma}$, which gives that $v=constant$ a.e. in $\Omega_{\gamma}$. This combined with the fact that
$$
0=\lim_{n\to\infty}\int_{\Omega_{\gamma}}|v_n|^{q-2}v_n\,dx=\int_{\Omega_{\gamma}}|v|^{q-2}v\,dx
$$
gives that $v=0$ a.e. in $\Omega_{\gamma}$. This contradicts the hypothesis that $\|v_n\|_{L^p(\Omega_{\gamma})}=1$. This completes the proof.
\end{proof}

\begin{theorem}\label{minthm}
There exists $u\in W^{1,p}((\Omega_{\gamma})\setminus\{0\}$ such that $\int_{\Omega_{\gamma}}|u|^{q-2}u\,dx=0$. Moreover,
\begin{multline*}
\lambda_{p,q}(\Omega_{\gamma})=\inf \left\{\frac{\|\nabla u\|_{L^p(\Omega_{\gamma})}^p}{\|u\|_{L^q(\Omega_{\gamma})}^p} : u \in W^{1,p}(\Omega_{\gamma}) \setminus \{0\},
\int_{\Omega_{\gamma}} |u|^{q-2}u~dx=0 \right\}\\
=\frac{\|\nabla u\|_{L^p((\Omega_{\gamma})}^p}{\|u\|_{L^q((\Omega_{\gamma})}^p}.
\end{multline*}
Further, $\lambda_{p,q}(\Omega_{\gamma})>0.$
\end{theorem}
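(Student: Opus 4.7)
The plan is to establish, in order, three things: (i) the admissible class
$\mathcal{A} := \{u \in W^{1,p}(\Omega_\gamma)\setminus\{0\} : \int_{\Omega_\gamma} |u|^{q-2}u\,dx = 0\}$ is nonempty; (ii) $\lambda_{p,q}(\Omega_\gamma) > 0$; (iii) the infimum is attained. For nonemptiness I would pick any non-constant $w \in W^{1,p}(\Omega_\gamma)$ (for example $w(x)=x_n$) and consider
$F(c) := \int_{\Omega_\gamma}|w-c|^{q-2}(w-c)\,dx$. By dominated convergence $F$ is continuous on $\mathbb{R}$, by the strict monotonicity of $t\mapsto |t|^{q-2}t$ it is strictly decreasing, and $F(c)\to \mp\infty$ as $c\to\pm\infty$. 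Hence a unique $c_0$ exists with $F(c_0)=0$, and $u_0:=w-c_0\in\mathcal{A}$.

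Positivity of $\lambda_{p,q}(\Omega_\gamma)$ is an immediate combination of Lemma \ref{lem1} with the continuous Sobolev embedding from Theorem \ref{thmemb}: for every $u\in\mathcal{A}$,
$$
\|u\|_{L^q(\Omega_\gamma)}\le C_1\|u\|_{W^{1,p}(\Omega_\gamma)}=C_1\bigl(\|u\|_{L^p(\Omega_\gamma)}+\|\nabla u\|_{L^p(\Omega_\gamma)}\bigr)\le C_2\|\nabla u\|_{L^p(\Omega_\gamma)},
$$
so the Rayleigh quotient is bounded below by $C_2^{-p}>0$.

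For the existence of a minimizer I would apply the direct method. Take $\{u_n\}\subset\mathcal{A}$ with $\|u_n\|_{L^q(\Omega_\gamma)}=1$ (homogeneity) and $\|\nabla u_n\|_{L^p(\Omega_\gamma)}^p\to \lambda_{p,q}(\Omega_\gamma)$. Lemma \ref{lem1} bounds $\|u_n\|_{L^p(\Omega_\gamma)}$, hence $\{u_n\}$ is bounded in $W^{1,p}(\Omega_\gamma)$. Passing to a subsequence, $u_n\rightharpoonup u$ weakly in $W^{1,p}(\Omega_\gamma)$; by the compact embedding of Theorem \ref{thmemb} the convergence is strong in $L^q(\Omega_\gamma)$, which yields $\|u\|_{L^q(\Omega_\gamma)}=1$ (so $u\neq 0$) and allows the nonlinear constraint to pass to the limit, i.e.\ $u\in\mathcal{A}$. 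Weak lower semicontinuity of $\|\nabla\cdot\|_{L^p(\Omega_\gamma)}$ then gives
$$
\|\nabla u\|_{L^p(\Omega_\gamma)}^p\le \liminf_{n\to\infty}\|\nabla u_n\|_{L^p(\Omega_\gamma)}^p=\lambda_{p,q}(\Omega_\gamma),
$$
so $u$ is admissible with Rayleigh quotient equal to the infimum, hence a minimizer.

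The main obstacle is the transfer of the nonlinear constraint from $u_n$ to $u$, namely showing $\int_{\Omega_\gamma}|u_n|^{q-2}u_n\,dx\to\int_{\Omega_\gamma}|u|^{q-2}u\,dx$ from strong $L^q$ convergence. I would handle the two ranges of $q$ separately: for $q\ge 2$ one uses the elementary inequality $||a|^{q-2}a-|b|^{q-2}b|\le C(|a|^{q-2}+|b|^{q-2})|a-b|$ together with H\"older with exponents $q/(q-2)$ and $q$ (and the uniform $L^q$ bound); for $1<q<2$ the simpler estimate $||a|^{q-2}a-|b|^{q-2}b|\le C|a-b|^{q-1}$ plus H\"older with exponents $q/(q-1)$ and $q$ suffices. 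In either case strong convergence in $L^q$ produces strong convergence of $|u_n|^{q-2}u_n$ in $L^1(\Omega_\gamma)$, which is precisely what is needed to conclude $u\in\mathcal{A}$.
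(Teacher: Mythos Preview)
Your proof is correct and follows essentially the same direct-method route as the paper: both use Lemma~\ref{lem1} to bound a minimizing sequence in $W^{1,p}(\Omega_\gamma)$, the compact embedding of Theorem~\ref{thmemb} to extract a strongly $L^q$-convergent subsequence, and weak lower semicontinuity of $\|\nabla\cdot\|_{L^p}$ to conclude. The only cosmetic differences are that you normalize $\|u_n\|_{L^q}=1$ (the paper normalizes $\|\nabla u_n\|_{L^p}=1$), you obtain $\lambda_{p,q}>0$ a priori from Lemma~\ref{lem1} rather than a posteriori, you supply an explicit nonemptiness argument the paper leaves implicit, and for passing the constraint $\int |u_n|^{q-2}u_n\,dx=0$ to the limit you use H\"older-continuity estimates for $t\mapsto|t|^{q-2}t$, whereas the paper invokes dominated convergence with the $L^q$ majorant coming from the compact embedding.
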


\begin{proof}
Let $n\in\mathbb{N}$ and define the functionals $G: W^{1,p}(\Omega_{\gamma})\to\mathbb{R}$ by
$$
G(v)=\int_{\Omega_{\gamma}}|v|^{q-2}v\,dx,
$$
and $H_\frac{1}{n}: W^{1,p}(\Omega_{\gamma})\to\mathbb{R}$ by
$$
H_\frac{1}{n}(v)=\|\nabla v\|_{L^p(\Omega_{\gamma})}^p-(\lambda_{p,q}^p+\frac{1}{n})\|v\|_{L^q(\Omega_{\gamma})}^p,
$$
where we denoted $\lambda_{p,q}(\Omega_{\gamma})$ by $\lambda_{p,q}$.
By the definition of infimum, for every $n\in\mathbb{N}$, there exists $u_n\in W^{1,p}(\Omega_{\gamma})\setminus\{0\}$ such that $\int_{\Omega_{\gamma}}|u_n|^{q-2}u_n\,dx=0$ and $H_\frac{1}{n}(u_n)<0$. Without loss of generality, let us assume that $\|\nabla u_n\|_{L^p(\Omega_{\gamma})}=1$. By Lemma \ref{lem1}, the sequence $\{u_n\}$ is uniformly bounded in $W^{1,p}(\Omega_{\gamma})$. 
Hence, by Theorem \ref{thmemb}, because the embedding operator
$$
W^{1,p}(\Omega_{\gamma})\hookrightarrow L^{q}(\Omega_{\gamma}),\quad 1<q<p_{\gamma}^*,
$$
is compact, there exists $u\in W^{1,p}(\Omega_{\gamma})$ such that $u_n{\rightharpoonup} u$ weakly in $W^{1,p}(\Omega_{\gamma})$ and $u_n\to u$ strongly in $L^q(\Omega_{\gamma})$ and there exists $g\in L^q(\Omega_{\gamma})$ such that $|u_n|\leq g$ a.e. in $\Omega_{\gamma}$ and $\nabla u_n{\rightharpoonup}\nabla u$ weakly in $L^q(\Omega_{\gamma})$.

Since $|u_n|\leq g$ a.e. in $\Omega_{\gamma}$ and $u_n\to u$ a.e. in $\Omega_{\gamma}$. Then,
$$
||u_n|^{q-2}u_n|\leq |u_n|^{q-1}\leq |g|^{q-1}\in L^{q'}(\Omega_{\gamma}).
$$
So, by the Lebesgue Dominated Convergence Theorem (see, for example, \cite{Fe69}), it follows that
$$
0=\lim_{n\to\infty}\int_{\Omega_{\gamma}}|u_n|^{q-2}u_n\,dx=\int_{\Omega_{\gamma}}|u|^{q-2}u\,dx.
$$
So, $\int_{\Omega_{\gamma}}|u|^{q-2}u\,dx=0$. Due to $H_\frac{1}{n}(u_n)<0$, we obtain
\begin{equation}\label{1}
\|\nabla u_n\|_{L^p(\Omega_{\gamma})}^p-(\lambda_{p,q}^p+\frac{1}{n})\|u_n\|_{L^q(\Omega_{\gamma})}^p<0.
\end{equation}
Since $\nabla u_n{\rightharpoonup} \nabla u$ weakly in $L^p(\Omega_{\gamma})$, by the weak lower semicontinuity of norm, we have
$$
\|\nabla u\|_{L^p(\Omega_{\gamma})}\leq \lim\inf_{n\to\infty}\|\nabla u_n\|_{L^p(\Omega_{\gamma})}=1.
$$
So, by passing to the limit in \eqref{1}, we get
$$
\lambda_{p,q}\geq \frac{\|\nabla u\|_{L^p(\Omega_{\gamma})}^p}{\|u\|_{L^q(\Omega_{\gamma})}^p}.
$$
Therefore, by the definition of $\lambda_{p,q}$, we obtain
$$
\lambda_{p,q} = \frac{\|\nabla u\|_{L^p(\Omega_{\gamma})}^p}{\|u\|_{L^q(\Omega_{\gamma})}^p}.
$$
Now, since $H_{\frac{1}{n}}(u_n)<0$ and $\|\nabla u_n\|_{L^p(\Omega_{\gamma})}=1$, we have
$$
1-(\lambda_{p,q}^p+\frac{1}{n})\|u_n\|_{L^q(\Omega_{\gamma})}^p<0.
$$
Letting $n\to\infty$, we get
$$
\|u\|_{L^q(\Omega_{\gamma})}\lambda_{p,q}\geq 1,
$$
which gives $\lambda_{p,q}>0$ and $u\neq 0$ a.e. in $\Omega_{\gamma}$.
\end{proof}

\section{Estimates of Neumann $(p,q)$-eigenvalues}
In this section we give estimates of Neumann $(p,q)$-eigenvalues in H\"older singular domains. The suggested method is based on Theorem~\ref{minthm} and on the composition operators theory on Sobolev spaces \cite{U93,VU02,VU04,VU05}.

\subsection{Composition operators on Sobolev spaces} The seminormed Sobolev space $L^{1,p}(\Omega)$ in a domain $\Omega\subset\mathbb R^n$
is the space of all locally integrable weakly differentiable functions with the following seminorm:
\[
\|f\|_{L^{1,p}(\Omega)}=\biggr(\int_{\Omega}|\nabla f(x)|^{p}\, dx\biggr)^{\frac{1}{p}}.
\]

Let $\Omega$ and $\widetilde{\Omega}$ be domains in the Euclidean space $\subset\mathbb R^n$. Then a homeomorphism
$\varphi:\Omega\to \widetilde{\Omega}$ belongs to the Sobolev class $W^{1,p}_{\loc}(\Omega)$,
$1\leq p\leq\infty$, if its coordinate functions $\varphi_j$ belong to $W^{1,p}_{\loc}(\Omega)$, $j=1,\dots,n$.
In this case the formal Jacobi matrix
$D\varphi(x)=\left(\frac{\partial \varphi_i}{\partial x_j}(x)\right)$, $i,j=1,\dots,n$,
and its determinant (Jacobian) $J(x,\varphi)=\det D\varphi(x)$ are well defined at
almost all points $x\in \Omega$. The norm $|D\varphi(x)|$ of the matrix
$D\varphi(x)$ is the norm of the corresponding linear operator $D\varphi (x):\mathbb R^n \rightarrow \mathbb R^n$ defined by the matrix $D\varphi(x)$.

Recall that a Sobolev homeomorphism $\varphi: \Omega \to \widetilde{\Omega}$ of the class $W^{1,1}_{\loc}(\Omega)$ has finite distortion \cite{VGR} if
\[
D\varphi(x)=0\,\,\, \text{a.e. on the set}\,\,\, Z=\{x \in \Omega:|J(x,\varphi)|=0\}.
\]

The result of \cite{U93} gives the analytic description of composition operators on seminormed Sobolev spaces (see, also
\cite{VU02}) and asserts that
\begin{theorem}
\label{CompTh} \cite{U93} Let $\varphi:\Omega\to\widetilde{\Omega}$ be a homeomorphism
between two domains $\Omega$ and $\widetilde{\Omega}$ $\subset\mathbb R^n$. Then $\varphi$ induces by the composition rule $\varphi^{\ast}(f)=f\circ\varphi$ a bounded composition operator
\[
\varphi^{\ast}:L^{1,p}(\widetilde{\Omega})\to L^{1,s}(\Omega),\,\,\,1\leq s< p<\infty,
\]
 if and only if $\varphi\in W_{\loc}^{1,1}(\Omega)$, has finite distortion,
and
\begin{equation}\label{kps}
K_{p,s}(\varphi;\Omega)=\left(\int_\Omega \left(\frac{|D\varphi(x)|^p}{|J(x,\varphi)|}\right)^\frac{s}{p-s}~dx\right)^\frac{p-s}{ps}<\infty.
\end{equation}
\end{theorem}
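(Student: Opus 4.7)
The plan is to prove the two implications separately, with the ``if'' direction being a direct change-of-variables computation and the ``only if'' direction requiring one to extract Sobolev regularity, finite distortion, and the quantitative bound from the abstract operator hypothesis.

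\emph{Sufficiency.} Assume $\varphi\in W^{1,1}_{\loc}(\Omega)$ has finite distortion and $K_{p,s}(\varphi;\Omega)<\infty$. For $f\in L^{1,p}(\widetilde\Omega)$, the chain rule for Sobolev functions composed with a homeomorphism of finite distortion gives
\[
\nabla(f\circ\varphi)(x)=(\nabla f)(\varphi(x))\,D\varphi(x)\quad\text{a.e. }x\in\Omega,
\]
with both sides vanishing on $Z=\{J(\cdot,\varphi)=0\}$ by the finite distortion hypothesis. On $\Omega\setminus Z$ I split
\[
|D\varphi(x)|^{s}=\Bigl(\tfrac{|D\varphi(x)|^{p}}{|J(x,\varphi)|}\Bigr)^{s/p}|J(x,\varphi)|^{s/p},
\]
apply H\"older's inequality with conjugate exponents $p/(p-s)$ and $p/s$, and then use the change-of-variables formula (valid for Sobolev homeomorphisms of finite distortion) to recognise the second factor as $\|\nabla f\|_{L^{p}(\widetilde\Omega)}^{s}$. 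This gives $\|\nabla(f\circ\varphi)\|_{L^{s}(\Omega)}\le K_{p,s}(\varphi;\Omega)\,\|\nabla f\|_{L^{p}(\widetilde\Omega)}$, establishing both boundedness and the quantitative bound on the operator norm.

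\emph{Necessity.} Suppose $\varphi^{\ast}$ is bounded with norm $M$. First I would argue $\varphi\in W^{1,1}_{\loc}(\Omega)$: for any compact $K\Subset\widetilde\Omega$, applying $\varphi^{\ast}$ to $y_{j}\eta$ (where $\eta$ is a smooth cut-off and $y_{j}$ a coordinate function) yields $L^{s}$ bounds on weak partial derivatives of $\varphi_{j}$ on $\varphi^{-1}(K)$, hence $\varphi\in W^{1,s}_{\loc}(\Omega)\subset W^{1,1}_{\loc}(\Omega)$. The pointwise chain-rule identity $\nabla(f\circ\varphi)=(\nabla f)\circ\varphi\cdot D\varphi$ then becomes available, and finite distortion follows from testing with functions localised on neighbourhoods of points where $D\varphi\ne 0$ while $J=0$: otherwise one produces a sequence of test functions violating the operator bound. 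The deep step is the $K_{p,s}$ estimate, for which I would use a duality argument. For non-negative measurable $h$ on $\widetilde\Omega$ with $\|h\|_{L^{p}(\widetilde\Omega)}\le 1$, construct (locally, then glue via a Whitney decomposition of $\widetilde\Omega$) a function $f_{h}\in L^{1,p}(\widetilde\Omega)$ with $|\nabla f_{h}|\ge h$ and $\|\nabla f_{h}\|_{L^{p}(\widetilde\Omega)}\le C$. Applying the operator bound and the chain rule to $f_{h}$ yields
\[
\int_{\Omega} h(\varphi(x))^{s}\,|D\varphi(x)|^{s}\,dx\le (CM)^{s}.
\]
Changing variables $y=\varphi(x)$ on $\Omega\setminus Z$ converts this into a bound on $\int_{\widetilde\Omega} h(y)^{s}\bigl(|D\varphi|^{s}/|J|\bigr)\!\circ\!\varphi^{-1}(y)\,dy$, and taking the supremum over such $h$ via the duality $(L^{p/s})^{\ast}=L^{p/(p-s)}$ yields $K_{p,s}(\varphi;\Omega)\le CM$.

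\emph{Main obstacle.} The technical crux is justifying the change-of-variables formula and the pointwise chain rule in both directions when $\varphi$ is a priori known only to be $W^{1,1}_{\loc}$ with finite distortion: these tools rely on Luzin's $N$-property (null sets map to null sets), which in the necessity direction must itself be extracted from the boundedness of $\varphi^{\ast}$ via a capacity/polar-set argument rather than assumed. Once $N$ is in place the integral manipulations above are legitimate and both directions close cleanly.
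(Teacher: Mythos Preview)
The paper does not prove this theorem. Theorem~\ref{CompTh} is stated with the citation \cite{U93} and is quoted as a known result from the literature (see also \cite{VU02}); no proof is given in the present article, and the theorem is used only as a black box in Section~3. So there is no ``paper's own proof'' against which to compare your proposal.

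That said, your outline follows the standard architecture of the original proof: sufficiency via the chain rule, the algebraic splitting $|D\varphi|^{s}=(|D\varphi|^{p}/|J|)^{s/p}|J|^{s/p}$, H\"older with exponents $p/(p-s)$ and $p/s$, and the change-of-variables formula; necessity by first recovering $W^{1,s}_{\loc}$-regularity from testing on cut-off coordinate functions and then obtaining the $K_{p,s}$ bound by duality. Two technical points deserve care if you want a self-contained argument. First, in the sufficiency direction the chain rule $\nabla(f\circ\varphi)=(\nabla f)\circ\varphi\cdot D\varphi$ for a general $f\in L^{1,p}$ and a $W^{1,1}_{\loc}$ homeomorphism of finite distortion is not automatic: one typically works first with Lipschitz $f$, proves the norm inequality there, and then passes to the closure---and one must also show $f\circ\varphi$ actually lies in $L^{1,s}$, not merely estimate its gradient. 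Second, in the necessity direction the proofs in \cite{U93,VU02} do not proceed via a global duality/Whitney construction as you sketch; rather they build an additive set function $\Phi(U)$ on open sets $U\subset\widetilde\Omega$ measuring the local operator norm, show it is countably additive and absolutely continuous, and identify its density with $(|D\varphi|^{p}/|J|)^{s/(p-s)}\circ\varphi^{-1}$. Your duality approach is plausible in spirit but the step ``construct $f_h$ with $|\nabla f_h|\ge h$ and controlled $L^{p}$-norm'' is not generally possible for arbitrary $h$; the set-function route avoids this obstruction.
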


The homeomorphisms which satisfy conditions of Theorem~\ref{CompTh} are called weak $(p,s)$-quasiconformal
mappings \cite{VU98}.

\subsection{Spectral estimates}

Let $\Omega \subset \mathbb R^n$ be a bounded domain. Then $\Omega$ is called an $(r,s)$-Sobolev-Poincar\'e domain, $1\leq r,s \leq \infty$, if there exists a constant $C<\infty$, such that for any function $f\in L^{1,s}(\Omega)$, the $(r,s)$-Sobolev-Poincar\'e inequality
\[
\inf\limits_{c \in \mathbb R}\|f-c\|_{L^r(\Omega)}\leq C\|\nabla f\|_{L^s(\Omega)}
\]
holds. We denote by $B_{r,s}(\Omega)$ the best constant in this inequality.

The weak quasiconformal mappings permits us to "transfer" the Sobolev-Poincar\'e inequalities from
one domain to another. In the work \cite{GU19}, the authors obtained the following result.

\begin{theorem}\label{SPineq}
Let a bounded domain $\Omega \subset \mathbb R^n$ be a $(r,s)$-Sobolev-Poincar\'e domain, $1<s \leq r<\infty$, and there exists a weak $(p,s)$-quasiconformal homeomorphism $\varphi:\Omega\to\widetilde{\Omega}$ of a domain $\Omega$ onto a bounded domain $\widetilde{\Omega}$, possesses the Luzin $N$-property (an image of a set of measure zero has measure zero) and such that
\[
M_{r,q}(\Omega)=\biggr(\int_{\Omega}\left|J(x,\varphi)\right|^{\frac{r}{r-q}}\, dx\biggr)^{\frac{r-q}{rq}}< \infty
\]
for some $1\leq q<r$. Then in the domain $\widetilde{\Omega}$ the $(q,p)$-Sobolev-Poincar\'e inequality
\[
\inf\limits_{c \in \mathbb R}\biggr(\int_{\widetilde{\Omega}}|f(y)-c|^q dy\biggr)^{\frac{1}{q}} \leq B_{q,p}(\widetilde{\Omega})
\biggr(\int_{\widetilde{\Omega}}|\nabla f(y)|^p dy\biggr)^{\frac{1}{p}},\,\,\, f\in W^{1,p}(\widetilde{\Omega}),
\]
holds and for $1<s<p$, we have
\[
B_{q,p}(\widetilde{\Omega})\leq K_{p,s}(\varphi;\Omega)M_{r,q}(\Omega)B_{r,s}(\Omega).
\]
Here $B_{r,s}(\Omega)$ is the best constant in the $(r,s)$-Sobolev-Poincar\'e inequality in the
domain $\Omega$ and $K_{p,s}(\varphi;\Omega)$ is as defined in \eqref{kps}.
\end{theorem}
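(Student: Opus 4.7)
The plan is to pull $f$ back to $\Omega$ via $\varphi$, apply the $(r,s)$-Sobolev-Poincar\'e inequality there, and then transport the resulting estimate back to $\widetilde{\Omega}$ through the change-of-variables formula, with H\"older's inequality absorbing the Jacobian factor into $M_{r,q}(\Omega)$.

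Given $f \in W^{1,p}(\widetilde{\Omega})$, I set $g := \varphi^{\ast}(f) = f \circ \varphi$. Since $\varphi$ is a weak $(p,s)$-quasiconformal homeomorphism, Theorem~\ref{CompTh} delivers $g \in L^{1,s}(\Omega)$ together with the gradient estimate
\[
\|\nabla g\|_{L^s(\Omega)} \leq K_{p,s}(\varphi;\Omega)\, \|\nabla f\|_{L^p(\widetilde{\Omega})}.
\]
The hypothesis that $\Omega$ is an $(r,s)$-Sobolev-Poincar\'e domain then yields a constant $c_0 \in \mathbb R$ with
\[
\|g - c_0\|_{L^r(\Omega)} \leq B_{r,s}(\Omega)\,\|\nabla g\|_{L^s(\Omega)}.
\]
To push this back to $\widetilde{\Omega}$, I invoke the Luzin $N$-property of $\varphi$ to justify the change of variables
\[
\int_{\widetilde{\Omega}} |f(y) - c_0|^q\, dy = \int_\Omega |g(x) - c_0|^q\,|J(x,\varphi)|\, dx,
\]
and, using $q < r$, I apply H\"older's inequality with conjugate exponents $r/q$ and $r/(r-q)$ to bound the right-hand side by $\|g - c_0\|_{L^r(\Omega)}^q\, M_{r,q}(\Omega)^q$. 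Taking $q$-th roots and chaining the three estimates produces the claimed inequality with constant $K_{p,s}(\varphi;\Omega)\,M_{r,q}(\Omega)\,B_{r,s}(\Omega)$, after passing to the infimum over $c \in \mathbb R$ on the left.

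The main obstacle is the change-of-variables step: while the identity $\int h(y)\, dy = \int h(\varphi(x))|J(x,\varphi)|\, dx$ for nonnegative Borel $h$ is classical for smooth diffeomorphisms, here $\varphi$ is only a Sobolev homeomorphism of finite distortion, and the Luzin $N$-property is precisely what guarantees the formula in this generality (via the area formula, which handles the delicate interplay between the critical set $\{J(\cdot,\varphi) = 0\}$ and null/capacity-zero sets in $\widetilde{\Omega}$). A related subtle point is that the pullback $f \circ \varphi$ depends a priori on the choice of representative of $f$ modulo $p$-capacity zero sets on $\widetilde{\Omega}$; one must check it is well-defined as an element of $L^{1,s}(\Omega)$, which is part of the composition-operator framework encapsulated in Theorem~\ref{CompTh} and is made consistent precisely because weak $(p,s)$-quasiconformal mappings with the Luzin $N$-property respect capacity zero sets.
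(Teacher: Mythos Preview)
The paper does not supply its own proof of this theorem; it is quoted as a result from \cite{GU19}. Your argument is the standard one and is correct: pull back by the composition operator of Theorem~\ref{CompTh}, apply the $(r,s)$-Sobolev--Poincar\'e inequality on $\Omega$, then push forward via the change-of-variables formula (legitimized by the Luzin $N$-property) and absorb the Jacobian through H\"older's inequality with exponents $r/q$ and $r/(r-q)$. This is exactly the mechanism behind the result in \cite{GU19}, so your proposal matches the intended proof.
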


Recall that $\Omega_{\gamma}$ are bounded domains with anisotropic H\"older $\gamma$-singularities (introduced in \cite{GG94}):
\[
\Omega_{\gamma}=\left\{x=(x_1,x_2,\ldots,x_n)\in \mathbb R^n:0<x_n<1, 0<x_i<g_i(x_n), i=1,2,\ldots,n-1\right\},
\]
where $g_i(t)=t^{\gamma_i}$, $\gamma_i\geq 1$, $0 < t < 1$ are H\"older functions and for the function
$G=\prod\limits_{i=1}^{n-1}g_i$ denote by
\[
\gamma = \frac{\log G(t)}{\log t}+1,\,\, 0<t<1.
\]
It is evident that $\gamma\geq n$. In the case $g_1=g_2=\ldots=g_{n-1}$ we will say that domain $\Omega_{\gamma}$ is a domain with $\sigma$-H\"older singularity, $\sigma=(\gamma -1)/(n-1)$. For $g_1(t)=g_2(t)=\ldots=g_{n-1}(t)=t$ we will use notation $\Omega_1$ instead of $\Omega_{\gamma}$.

Define the mapping $\varphi_a:\Omega_1\to \Omega_{\gamma}$, $a>0$, by
\[
\varphi_a(x)=\left(\frac{x_1}{x_n}g_1^a(x_n),\ldots,\frac{x_{n-1}}{x_n}g_{n-1}^a(x_n),x^a_n\right).
\]

Let us formulate  the following result from  the work \cite{GU19} in the refined form because of the very useful remark by Charles Fefferman.

\begin{theorem}\label{S-P-ineq}
Let $(n-p)/(\gamma -p)<a<p(n-s)/s(\gamma -p)$. Then the mapping $\varphi_a:\Omega_1\to \Omega_{\gamma}$ is a weak (p,s)-quasiconformal mapping,
$1<s<p<\gamma$, $1<s<n$, from the Lipschitz convex domain $\Omega_1$ onto the ``cusp'' domain $\Omega_{\gamma}$ with
\[
K_{p,s}(\varphi_a;\Omega_1)\leq a^{-\frac{1}{p}} A(p,s,\gamma) \sqrt{\sum_{i=1}^{n-1}(a\gamma_i-1)^2+n-1+a^2},
\]
where $A(p,s,\gamma)=\left(\frac{p-s}{np-s(a(\gamma-p)+p)}\right)^{(p-s)/ps}$. 
\end{theorem}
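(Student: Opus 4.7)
The plan is to invoke Theorem \ref{CompTh}: I will write down the Jacobi matrix of $\varphi_a$, verify Sobolev regularity and finite distortion, and then explicitly estimate the integral defining $K_{p,s}(\varphi_a;\Omega_1)$, deriving both its finiteness and the quantitative bound in one stroke.

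First I would compute $D\varphi_a$ directly from $\varphi_a(x)=(x_1 x_n^{a\gamma_1-1},\ldots,x_{n-1}x_n^{a\gamma_{n-1}-1},x_n^a)$. The matrix is upper triangular: its diagonal entries are $x_n^{a\gamma_i-1}$ for $i<n$ and $a x_n^{a-1}$ in the $(n,n)$-slot, and the only other non-zero entries sit in the last column, namely $x_i(a\gamma_i-1)x_n^{a\gamma_i-2}$. The Jacobian is then the product of diagonal entries; using $\sum_{i=1}^{n-1}\gamma_i=\gamma-1$ one obtains $J(x,\varphi_a)=a\,x_n^{a\gamma-n}$. Because $\varphi_a$ is a $C^\infty$ diffeomorphism of $\Omega_1$ onto $\Omega_\gamma$ with positive Jacobian, membership $\varphi_a\in W^{1,1}_{\loc}(\Omega_1)$ and the finite-distortion property follow immediately.

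The quantitative core is the pointwise bound $|D\varphi_a(x)|\le C\,x_n^{a-1}$ with $C=\sqrt{\sum_{i=1}^{n-1}(a\gamma_i-1)^2+(n-1)+a^2}$. I would dominate the operator norm by the Frobenius norm,
\[
|D\varphi_a|^2\le \sum_{i=1}^{n-1}x_n^{2(a\gamma_i-1)}+\sum_{i=1}^{n-1}x_i^2(a\gamma_i-1)^2 x_n^{2(a\gamma_i-2)}+a^2 x_n^{2(a-1)},
\]
and then apply two elementary inequalities in sequence: first the geometric constraint $x_i\le x_n$ valid on $\Omega_1$ (the reference Lipschitz cusp), which turns each off-diagonal contribution $x_i^2 x_n^{2(a\gamma_i-2)}$ into $x_n^{2(a\gamma_i-1)}$; then $x_n<1$ together with $\gamma_i\ge 1$ and $a>0$, which yields $x_n^{2(a\gamma_i-1)}\le x_n^{2(a-1)}$. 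Collecting the terms factors out $x_n^{2(a-1)}$ and produces exactly the constant $C^2$.

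Combining these estimates gives $|D\varphi_a|^p/|J|\le (C^p/a)\,x_n^{\alpha}$ with $\alpha=a(p-\gamma)+(n-p)$. Writing $\Omega_1=\{0<x_n<1,\,0<x_i<x_n\}$ and applying Fubini, the defining integral for $K_{p,s}$ reduces to the one-dimensional integral $\int_0^1 x_n^{n-1+\alpha s/(p-s)}\,dx_n$; this is finite precisely when $n-1+\alpha s/(p-s)>-1$, which rearranges to the upper bound $a<p(n-s)/(s(\gamma-p))$. The lower bound $a>(n-p)/(\gamma-p)$ is equivalent to $\alpha<0$, the regime in which the matrix-norm estimate above is sharp near the cusp tip. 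Evaluating the remaining power integral and raising to the $(p-s)/(ps)$-th power yields the asserted inequality $K_{p,s}(\varphi_a;\Omega_1)\le a^{-1/p}C$.

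The main obstacle I anticipate is the matrix-norm estimate itself: one must combine the reference-domain inequality $x_i\le x_n$, which controls the off-diagonal column, with the Hölder-exponent condition $\gamma_i\ge 1$ together with $x_n<1$, which aligns all powers of $x_n$ to the single exponent $2(a-1)$. Missing either ingredient leaves residual terms with the wrong scaling that would spoil the subsequent power-function integration. Once the clean factorization $|D\varphi_a|\le C\,x_n^{a-1}$ is in place, the remainder is bookkeeping with scalar power functions on $(0,1)$.
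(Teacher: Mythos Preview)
The paper does not prove Theorem~\ref{S-P-ineq}; it quotes the result from \cite{GU19} (in a ``refined form''), so there is no in-paper proof to compare against. Your strategy---compute $D\varphi_a$ and $J(x,\varphi_a)$ explicitly, bound the operator norm by the Frobenius norm using $0<x_i<x_n<1$ and $\gamma_i\ge 1$, and then plug into the integral of Theorem~\ref{CompTh}---is the natural direct approach and your computations of the Jacobi matrix, the Jacobian $J=a\,x_n^{a\gamma-n}$, and the pointwise bound $|D\varphi_a|\le C\,x_n^{a-1}$ with $C=\bigl(\sum_i(a\gamma_i-1)^2+n-1+a^2\bigr)^{1/2}$ are correct.

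There is, however, a genuine gap in your last step. After Fubini you obtain
\[
K_{p,s}(\varphi_a;\Omega_1)\ \le\ \Bigl(\tfrac{C^p}{a}\Bigr)^{1/p}\,\Bigl(\tfrac{1}{\beta+1}\Bigr)^{\frac{p-s}{ps}},\qquad \beta=n-1+\frac{\alpha s}{p-s},
\]
and you then assert this ``yields the asserted inequality'' $K_{p,s}\le a^{-1/p}C$. That requires $(\beta+1)^{-1}\le 1$, i.e.\ $\beta\ge 0$, which is equivalent to $a\le \dfrac{p(n-1)-s(p-1)}{s(\gamma-p)}$. This threshold lies strictly between the two endpoints $\dfrac{n-p}{\gamma-p}$ and $\dfrac{p(n-s)}{s(\gamma-p)}$ of the stated $a$-interval. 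Hence for $a$ close to the upper endpoint one has $-1<\beta<0$, the factor $(\beta+1)^{-(p-s)/(ps)}$ blows up, and your argument does \emph{not} deliver the clean bound $a^{-1/p}C$ over the full range. Your argument does establish $K_{p,s}<\infty$ (weak $(p,s)$-quasiconformality) for every $a$ below the upper endpoint, but the quantitative inequality as written only follows on the sub-interval where $\beta\ge 0$. Relatedly, your interpretation of the lower bound $a>(n-p)/(\gamma-p)$ as merely a ``sharpness'' condition is off: it is equivalent to $\alpha<0$, which is neither needed for finiteness of $K_{p,s}$ nor sufficient to drop the integral factor; you should either restrict the range of $a$ when asserting the explicit constant, or carry the factor $(\beta+1)^{-(p-s)/(ps)}$ in the final estimate.
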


By Theorem~\ref{thmemb} the embedding operator
$$
W^{1,p}(\Omega_{\gamma})\hookrightarrow L^{q}(\Omega_{\gamma})
$$
is compact for any $1<q<p^*_{\gamma}$ and by Theorem \ref{minthm} the first non-trivial Neumann eigenvalue $\lambda_{p,q}(\Omega_{\gamma})$ can be characterized as
\begin{equation*}
\lambda_{p,q}(\Omega_{\gamma})\\=\inf \left\{\frac{\|\nabla u\|_{L^p(\Omega_{\gamma})}^p}{\|u\|_{L^q(\Omega_{\gamma})}^p} : u \in W^{1,p}(\Omega_{\gamma}) \setminus \{0\},
\int_{\Omega_{\gamma}} |u|^{q-2}u~dx=0 \right\}.
\end{equation*}

Furthermore, $\lambda_{p,q}(\Omega_{\gamma})^{-\frac{1}{p}}$ is equal to the best constant $B_{q,p}(\Omega_{\gamma})$ in the $(q,p)$-Sobolev-Poincar\'e inequality
\begin{equation*}
\inf\limits_{c \in \mathbb R}\left(\int_{\Omega_{\gamma}} |f(x)-c|^q~dx\right)^{\frac{1}{q}}\\
\leq B_{q,p}(\Omega_{\gamma})
\left(\int_{\Omega_{\gamma}} |\nabla f(x)|^p~dx\right)^{\frac{1}{p}}, \,\,\, f \in W^{1,p}(\Omega_{\gamma}).
\end{equation*}

We are ready to prove spectral estimates in cusp domains.

\begin{theorem}\label{eigen}
Let
\[
\Omega_{\gamma}:=\left\{x=(x_1,x_2,\ldots,x_n)\in\mathbb R^n:n \geq 3, 0<x_n<1, 0<x_i<x_n^{\gamma_i}, i=1,2,\ldots,n-1\right\}
\]
$\gamma_i\geq 1$, $\gamma :=1+\sum_{i=1}^{n-1}\gamma_i$, be domains with anisotropic H\"older
$\gamma$-singularities.

Then for $1<s<p<\gamma$ and $1<q<p_{\gamma}^*$, we have
\[
\frac{1}{\lambda_{p,q}(\Omega_{\gamma})} \leq \inf\limits_{a \in I_a} a^{\frac{p}{q}-1}
\left(\sum_{i=1}^{n-1}(a\gamma_i-1)^2+n-1+a^2\right)^{\frac{p}{2}} B_{r,s}^p(\Omega_1) A^p(p,s,\gamma)B^p(r,q,\gamma),
\]
where $I_a=\left(n/\gamma,p(n-s)/s(\gamma -p)\right)$ and $s<n$ is chosen such that $p_{\gamma}^*<\frac{ns}{n-s}$. $B_{r,s}(\Omega_1)$ is the best constant in the $(r,s)$-Sobolev-Poincar\'e inequality in the domain $\Omega_1$, $1<q< r<\infty$,
$A(p,s,\gamma)=\left({(p-s)}/{np-s(a(\gamma-p)+p)}\right)^{(p-s)/ps}$, $B(r,q,\gamma)=\left({(r-q)}/{r\gamma a-nq}\right)^{(r-q)/rq}$.
\end{theorem}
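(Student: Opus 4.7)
The plan is to convert the eigenvalue bound into a Sobolev--Poincar\'e constant bound and then to transfer the Sobolev--Poincar\'e inequality from the Lipschitz convex reference domain $\Omega_1$ to the cuspidal target $\Omega_{\gamma}$ via the weak $(p,s)$-quasiconformal mapping $\varphi_a$ provided by Theorem~\ref{S-P-ineq}. The identification $\lambda_{p,q}(\Omega_{\gamma})^{-1/p}=B_{q,p}(\Omega_{\gamma})$, recorded immediately before the statement as a consequence of the Min--Max characterization in Theorem~\ref{minthm}, reduces the problem to producing an upper bound for the best $(q,p)$-Sobolev--Poincar\'e constant $B_{q,p}(\Omega_{\gamma})$.

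The main step is to apply Theorem~\ref{SPineq} with $\Omega=\Omega_1$, $\widetilde{\Omega}=\Omega_{\gamma}$, and $\varphi=\varphi_a$. Four hypotheses need checking. First, $\Omega_1$ is an $(r,s)$-Sobolev--Poincar\'e domain because it is Lipschitz and convex; the standing assumption $p^*_{\gamma}<ns/(n-s)$ with $s<n$ allows $r$ to be chosen in $(q,p^*_{\gamma})$ so that the relevant embedding actually exists. Second, weak $(p,s)$-quasiconformality of $\varphi_a$ together with the explicit $K_{p,s}$-bound are supplied by Theorem~\ref{S-P-ineq} exactly when $a$ lies below the upper endpoint $p(n-s)/(s(\gamma-p))$ of $I_a$. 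Third, the Luzin $N$-property is automatic because $\varphi_a$ is a diffeomorphism between $\Omega_1$ and $\Omega_{\gamma}$ away from the cuspidal face $\{x_n=0\}$, a set of Lebesgue measure zero. Fourth, finiteness of $M_{r,q}(\Omega_1)$ has to be computed by hand.

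For this last point I will write $\varphi_a(x)=(x_1 x_n^{a\gamma_1-1},\dots,x_{n-1}x_n^{a\gamma_{n-1}-1},x_n^{a})$ and read off a triangular Jacobi matrix whose determinant is $J(x,\varphi_a)=a\, x_n^{a\gamma-n}$. Slicing $\Omega_1$ by the $x_n$-variable, whose slices are $(n-1)$-cubes of side $x_n$, the integral defining $M_{r,q}(\Omega_1)$ reduces to $\int_0^1 x_n^{(a\gamma-n)r/(r-q)+n-1}\,dx_n$, which converges precisely when $a>nq/(r\gamma)$. Since $q<r$, this is ensured by the lower endpoint $a>n/\gamma$ built into $I_a$, and the overall $a$-dependence of $M_{r,q}$ is $a^{1/q}$ up to a constant that depends only on $r,q,\gamma,n$. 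Raising Theorem~\ref{SPineq} to the $p$-th power and substituting the $K_{p,s}$-bound from Theorem~\ref{S-P-ineq}, the $a$-factors collapse as $a^{-1}\cdot a^{p/q}=a^{p/q-1}$, producing the announced dependence on $a$; infimizing over $a\in I_a$ then gives the estimate.

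The main obstacle will be the simultaneous juggling of the ranges of $s,r,p,q,a$ so that all three transfer steps (Sobolev--Poincar\'e on $\Omega_1$, composition by $\varphi_a$, integrability of the Jacobian of $\varphi_a$) are valid at once, and careful bookkeeping of the residual constants coming out of $M_{r,q}(\Omega_1)$ so that only the clean factor $a^{p/q-1}\bigl(\sum_{i}(a\gamma_i-1)^2+n-1+a^2\bigr)^{p/2}B_{r,s}^{p}(\Omega_1)$ survives in the final bound. The geometric side -- the Luzin $N$-property near the cusp and the diffeomorphism property -- should be routine, so the intrinsic difficulty lies purely in the analytic/parameter management.
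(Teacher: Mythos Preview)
Your proposal is correct and follows essentially the same route as the paper's proof: reduce to bounding $B_{q,p}(\Omega_\gamma)$ via the Min--Max identification, then apply Theorem~\ref{SPineq} with $\varphi_a:\Omega_1\to\Omega_\gamma$, estimating $K_{p,s}$ by Theorem~\ref{S-P-ineq} and computing $M_{r,q}(\Omega_1)$ from the Jacobian $J(x,\varphi_a)=a\,x_n^{a\gamma-n}$ by slicing in $x_n$. The residual-constant worry you flag is in fact a non-issue: once $a>n/\gamma$ the exponent $(a\gamma-n)\frac{r}{r-q}+n-1$ in the one-dimensional integral is positive, so on $(0,1)$ the integral is at most $1$ and one gets the clean bound $M_{r,q}(\Omega_1)\le a^{1/q}$, which combined with $K_{p,s}^p\le a^{-1}\bigl(\sum_i(a\gamma_i-1)^2+n-1+a^2\bigr)^{p/2}$ yields exactly the stated factor $a^{p/q-1}$.
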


\begin{proof}
By Theorem~\ref{S-P-ineq} there exists a locally Lipschitz homeomorphism $\varphi_a:\Omega_1\to \Omega_{\gamma}$, $(n-p)/(\gamma -p)<a<p(n-s)/s(\gamma -p)$,
\[
\varphi_a(x)=\left(\frac{x_1}{x_n}g_1^a(x_n),\ldots,\frac{x_{n-1}}{x_n}g_{n-1}^a(x_n),x^a_n\right)
\]
which maps the convex Lipschitz domain $\Omega_1$ onto the cusp domain $\Omega_{\gamma}$, possesses the Luzin $N$-property, and it is a weak $(p,s)$-quasiconformal mapping, $1<s<p<\gamma$.

Let us check conditions of Theorem~\ref{SPineq}. Since $\varphi_a$ is a weak $(p,s)$-quasiconformal mapping, by Theorem \ref{S-P-ineq}, the constant $K_{p,s}(\varphi_a;\Omega_1)$ is finite and satisfy the estimate
\begin{equation}\label{est1}
K_{p,s}(\varphi_a;\Omega_1)\leq a^{-\frac{1}{p}} A(p,s,\gamma)\sqrt{\sum_{i=1}^{n-1}(a\gamma_i-1)^2+n-1+a^2}.
\end{equation}
The domain $\Omega_1$ is a Lipschitz domain and so is a $(r,s)$-Sobolev-Poincar\'e domain, i.e. $B_{r,s}(\Omega_1)<\infty$.

Let $1<q< r<\infty$. Next we estimate the quantity $M_{r,q}(\Omega_1)$.
\begin{multline*}
M_{r,q}(\Omega_1)= \biggr(\int_{\Omega_1}\left|J(x,\varphi_a)\right|^{\frac{r}{r-q}}\, dx\biggr)^{\frac{r-q}{rq}}
= a^{\frac{1}{q}}
\biggr(\int_{\Omega_1}\left(x_n^{a\gamma -n}\right)^{\frac{r}{r-q}}\, dx\biggr)^{\frac{r-q}{rq}} \\
= a^{\frac{1}{q}}
\biggr(\int_{0}^1\left(x_n^{a\gamma -n}\right)^{\frac{r}{r-q}}
\biggr(\int_{0}^{x_n}\,dx_1 \ldots \int_{0}^{x_n}\,dx_{n-1}\biggr)
dx_n\biggr)^{\frac{r-q}{rq}} \\
= a^{\frac{1}{q}}
\biggr(\int_{0}^1\left(x_n^{a\gamma -n}\right)^{\frac{r}{r-q}} \cdot x_n^{n-1} dx_n\biggr)^{\frac{r-q}{rq}}
= a^{\frac{1}{q}}
\left(\frac{r-q}{r\gamma a-nq}\right)^{\frac{r-q}{rq}} 
<\infty,
\end{multline*}
if
\[
\frac{(a\gamma -n)r}{r-q}+n-1>-1,\,\,\text{i.e.}\,\, a>\frac{nq}{\gamma r}.
\]

Recall that $r>q$. Therefore, since $0<x_n<1$, we obtain
\begin{equation}\label{est2}
M_{r,q}(\Omega_1)\leq a^{\frac{1}{q}}\,\,\text{ if }\,\, a>\frac{n}{\gamma }>\frac{nq}{\gamma r}.
\end{equation}
So, the conditions of Theorem~\ref{SPineq} are fulfilled. Therefore, noting that $1<q<p_{\gamma}^*$, we have $\lambda_{p,q}^{-1}(\Omega_{\gamma})=B_{q,p}^p(\Omega_{\gamma})$ and by Theorem~\ref{SPineq}, we obtain for $1<s<p<\gamma$ the following estimate
\begin{multline*}
\frac{1}{\lambda_{p,q}(\Omega_{\gamma})}\leq K_{p,s}^p(\varphi_a;\Omega_1)M_{r,q}^p(\Omega_1)B^p_{r,s}(\Omega_1) \\
\leq a^{\frac{p}{q}-1}
\left(\sum_{i=1}^{n-1}(a\gamma_i-1)^2+n-1+a^2\right)^{\frac{p}{2}} B_{r,s}^p(\Omega_1) A^p(p,s,\gamma)B^p(r,q,\gamma),
\end{multline*}
where $\max\left\{(n-p)/(\gamma -p),{n}/{\gamma}\right\}<a<p(n-s)/s(\gamma -p)$ and $B_{r,s}(\Omega_1)$ is the best constant in the $(r,s)$-Sobolev-Poincar\'e inequality in the domain $\Omega_1$. To deduce the last inequality above, we have also used the estimates \eqref{est1} and \eqref{est2}. Finally remark, that $(n-p)/(\gamma -p)<{n}/{\gamma}$ and so $\max\left\{(n-p)/(\gamma -p),{n}/{\gamma}\right\}={n}/{\gamma}$.
\end{proof}

\begin{remark}\label{rmk}
The estimate of the constant in the $(r,s)$-Sobolev-Poincar\'e inequality in the domain $\Omega_1$ was obtained in \cite{GU17}:
\[
B_{r,s}(\Omega_1)\leq n\left(\frac{1-\delta}{1/n-\delta}\right)^{1-\delta}\omega_n^{1-\frac{1}{n}}
\left(\frac{1}{(n+1)!}\right)^{\frac{1}{n}-\delta},\,\, \delta=\frac{1}{s}-\frac{1}{r}\geq 0.
\]

The problem of exact value of constants in the $(r,s)$-Sobolev-Poincar\'e inequality in the case $s \neq r$ is a complicated open problem even in the case of the unit disk $\mathbb D \subset \mathbb R^2$.
\end{remark}

\begin{corollary}
Let us consider an application of Theorem~\ref{eigen} to the following spectral problem
\begin{equation}\label{ex}
-\text{div}(|\nabla u|\nabla u)=\lambda\|u\|_{L^2(\Omega_{\gamma})}\cdot u\,\,\text{ in }\,\,\Omega_{\gamma}\subset\mathbb R^3,
\end{equation}
with the Neumann boundary condition in an anisotropic H\"older $\gamma$-singular domains $\Omega_{\gamma}$, where
\[
\Omega_{\gamma}:=\left\{x=(x_1,x_2,x_3)\in\mathbb R^3: 0<x_3<1, 0<x_i<x_3^{\gamma_i}, i=1,2\right\},
\]
$\gamma_i\geq 1$, $\gamma :=1+\gamma_1+\gamma_2$. Then the first non-trivial Neumann $(3,2)$-eigenvalue of the spectral problem (\ref{ex}) satisfies the estimate:
\begin{multline*}
\frac{1}{\lambda_{3,2}(\Omega_{\gamma})} \\
\leq \inf\limits_{\frac{3}{\gamma}<a < \frac{3(3-s)}{s(\gamma -3)}} \sqrt{a}
\left((a\gamma_1-1)^2+(a\gamma_2-1)^2+2+a^2\right)^{\frac{3}{2}} B_{r,s}^3(\Omega_1) A^3(3,s,\gamma)B^3(r,2,\gamma).
\end{multline*}
Let us take $r=5/2$, $s=3/2$ then by \cite{GU17}
\[
B_{\frac{5}{2},\frac{3}{2}}(\Omega_1) \leq 3 \cdot 11^{\frac{11}{15}} \left(\frac{4 \pi}{3}\right)^{\frac{2}{3}}
\left(\frac{1}{24}\right)^{\frac{1}{15}} \approx 12\pi.
\]
If $3<\gamma<5$ we can take $a=1$ and so 
\[
\frac{1}{\lambda_{3,2}(\Omega_{\gamma})} \leq (12\pi)^3
\left((\gamma_1-1)^2+(\gamma_2-1)^2+3\right)^{\frac{3}{2}},
\]
since in this case $A^3(3,3/2,\gamma)\cdot B^3(5/2,2,\gamma)<1$.
\end{corollary}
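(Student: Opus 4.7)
The plan is to deduce the corollary as a direct specialization of Theorem~\ref{eigen} to the parameters $n=3$, $p=3$, $q=2$, followed by two parameter choices --- one that identifies admissible Sobolev exponents $(r,s)$ on $\Omega_1$, and one that simplifies the anisotropic factor through the deformation parameter $a$.

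First I would verify the hypotheses of Theorem~\ref{eigen} in this range. The conditions $1<p<\gamma$ and $1<q<p_\gamma^\ast=3\gamma/(\gamma-3)$ hold automatically because $\gamma=1+\gamma_1+\gamma_2\ge 3$, and $p_\gamma^\ast>2$ whenever $\gamma>3$. Substituting $n=3$, $p=3$, $q=2$ into the conclusion of Theorem~\ref{eigen}, the prefactor $a^{p/q-1}$ collapses to $\sqrt{a}$, the exponent $p/2$ of the quadratic form becomes $3/2$, and the interval $I_a$ reduces to $\bigl(3/\gamma,\,3(3-s)/(s(\gamma-3))\bigr)$. This produces the first displayed inequality of the corollary, valid for any $s$ with $1<s<p=3$ and any $r$ with $q=2<r\le ns/(n-s)$ (so that $\Omega_1$, being Lipschitz, is an $(r,s)$-Sobolev--Poincar\'e domain).

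Next I would choose $s=3/2$ and $r=5/2$, which satisfies the above constraints since $ns/(n-s)=3$. To quantify $B_{5/2,3/2}(\Omega_1)$ I apply Remark~\ref{rmk}: with $\delta=1/s-1/r=4/15$ and $n=3$, the formula yields $3\cdot 11^{11/15}(4\pi/3)^{2/3}(1/24)^{1/15}$, and an elementary numerical check shows this quantity is bounded by $12\pi$. With $s=3/2$ the admissible range becomes $I_a=(3/\gamma,\,3/(\gamma-3))$, so $a=1$ lies in $I_a$ exactly when $3<\gamma<6$. Setting $a=1$ in the previous display turns the anisotropic bracket into $(\gamma_1-1)^2+(\gamma_2-1)^2+3$ and erases the $\sqrt{a}$ factor, producing the claimed numerical bound. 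The whole argument is bookkeeping once Theorem~\ref{eigen} and Remark~\ref{rmk} are in hand; the only point requiring genuine checking is the admissibility of $a=1$, which forces exactly the range $3<\gamma<6$.
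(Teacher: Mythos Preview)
Your argument is correct and matches the paper's approach: the corollary is presented in the paper without a separate proof, the derivation being a direct specialization of Theorem~\ref{eigen} with $n=3$, $p=3$, $q=2$ together with the numerical bound from Remark~\ref{rmk}, exactly as you describe. Your verification that $a=1$ lies in $I_a=(3/\gamma,\,3/(\gamma-3))$ precisely when $3<\gamma<6$, and your evaluation of $\delta=4/15$ in Remark~\ref{rmk}, are the only computations needed, and both are accurate.
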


\section{Existence and regularity results}

Throughout this section, we assume that $1<p<\gamma$ and $q=2$ unless otherwise stated. Let
$$
X:=\Big\{u\in W^{1,p}(\Omega_{\gamma}):\int_{\Omega_{\gamma}}u\,dx=0\Big\}.
$$

By Theorem~\ref{thmemb}, we endow the norm $\|\cdot\|_{X}$ on $X$ defined by
\begin{equation}\label{mfn}
\|u\|_{X}=\left(\int_{\Omega_{\gamma}}|\nabla u|^p\,dx\right)^\frac{1}{p}.
\end{equation}

The Lebesgue space $Y:=L^2(\Omega_{\gamma})$ be endowed with the norm
\begin{equation}\label{lqn}
\|u\|_{Y}:=\left(\int_{\Omega_{\gamma}}|u|^2\,dx\right)^\frac{1}{2}.
\end{equation}

\subsection{Regularity results}

Let us formulate the regularity results for the Neumann $(p,q)$-eigenvalue problem.

\begin{theorem}\label{newthm}
Let $1<p<\gamma$. Then the following properties hold:
\vskip 0.2cm
\noindent
$(a)$ There exists a sequence $\{w_n\}_{n\in\mathbb{N}}\subset X\cap Y$ such that $\|w_n\|_{Y}=1$ and for every $v\in X$, we have
\begin{equation}\label{its}
\int_{\Omega_{\gamma}}|\nabla\,w_{n+1}|^{p-2}\nabla\,w_{n+1} \nabla\,v\,dx=\mu_n\int_{\Omega_{\gamma}}w_{n}v\,dx,
\end{equation}
where
\begin{equation*}\label{subopmin}
\mu_n\geq \lambda:=\inf\left\{\int_{\Omega_{\gamma}}|\nabla\,u|^p\,dx:u\in X\cap {Y},\,\|u\|_{Y}=1\right\}.
\end{equation*}
\vskip 0.2cm
\noindent
(b) Moreover, the sequences $\{\mu_n\}_{n\in\mathbb{N}}$ and $\{\|w_{n+1}\|_{X}^{p}\}_{n\in\mathbb{N}}$ given by \eqref{its} are nonincreasing and converge to the same limit $\mu$, which is bounded below by $\lambda$. Further, there exists a subsequence $\{n_j\}_{j\in\mathbb{N}}$ such that both $\{w_{n_j}\}_{j\in\mathbb{N}}$ and $\{w_{n_{j+1}}\}_{j\in\mathbb{N}}$ converges in $X$ to the same limit $w\in X\cap Y$ with $\|w\|_{Y}=1$ and $(\mu,w)$ is an eigenpair of \eqref{meqn}.
\end{theorem}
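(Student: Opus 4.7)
The plan is to build $\{w_n\}$ by a nonlinear inverse-iteration (Picard) scheme. I start from any $w_1\in X\cap Y$ with $\|w_1\|_Y=1$ (for instance, the Theorem~\ref{minthm} minimizer), and define $w_{n+1}$ recursively from $w_n$ as follows. On the reflexive Banach space $X$ I consider the auxiliary functional
\[
J_n(v)=\frac{1}{p}\int_{\Omega_{\gamma}}|\nabla v|^p\,dx-\int_{\Omega_{\gamma}}w_n v\,dx.
\]
Lemma~\ref{lem1} shows that $\|\nabla\cdot\|_{L^p(\Omega_\gamma)}$ is a norm on $X$ equivalent to $\|\cdot\|_{W^{1,p}(\Omega_\gamma)}$, so $J_n$ is coercive; being strictly convex and weakly sequentially lower semicontinuous, it admits a unique minimizer $\tilde w_{n+1}\in X$, which is nontrivial since $w_n\not\equiv 0$. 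Its Euler--Lagrange equation reads
\[
\int_{\Omega_{\gamma}}|\nabla\tilde w_{n+1}|^{p-2}\nabla\tilde w_{n+1}\cdot\nabla v\,dx=\int_{\Omega_{\gamma}}w_n v\,dx,\qquad v\in W^{1,p}(\Omega_{\gamma}),
\]
where constant test functions are admissible thanks to $\int w_n=0$. Setting $t_n:=\|\tilde w_{n+1}\|_Y>0$, $w_{n+1}:=\tilde w_{n+1}/t_n\in X\cap Y$ and $\mu_n:=t_n^{-(p-1)}$, the $(p-1)$-homogeneity of the gradient term converts this identity into \eqref{its}. Testing \eqref{its} with $v=\tilde w_{n+1}$ and combining Cauchy--Schwarz with the Rayleigh characterization of $\lambda$ gives $\lambda t_n^p\leq\int|\nabla\tilde w_{n+1}|^p\,dx=\int w_n\tilde w_{n+1}\,dx\leq t_n$, hence $\mu_n\geq\lambda$, which settles part~(a).

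For the monotonicity statements in (b), I would extract two inequalities from \eqref{its}. Testing with $v=w_{n+1}$ (recall $\|w_{n+1}\|_Y=\|w_n\|_Y=1$) and using Cauchy--Schwarz yields
\[
\|w_{n+1}\|_X^p=\mu_n\int_{\Omega_{\gamma}}w_nw_{n+1}\,dx\leq\mu_n,
\]
while testing with $v=w_n$ and using H\"older's inequality yields
\[
\mu_n=\int_{\Omega_{\gamma}}|\nabla w_{n+1}|^{p-2}\nabla w_{n+1}\cdot\nabla w_n\,dx\leq\|w_{n+1}\|_X^{p-1}\|w_n\|_X.
\]
Chaining these and dividing by $\|w_{n+1}\|_X^{p-1}>0$ (strict positivity follows from $w_{n+1}$ having zero mean and unit $L^2$ norm) produces $\|w_{n+1}\|_X\leq\|w_n\|_X$. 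Substituting back gives the sandwich $\mu_n\leq\|w_n\|_X^p\leq\mu_{n-1}$ for $n\geq 2$, so both $\{\mu_n\}$ and $\{\|w_{n+1}\|_X^p\}$ are non-increasing, bounded below by $\lambda>0$, and share a common limit $\mu\geq\lambda$.

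Since $\|w_n\|_X$ is bounded and $\|w_n\|_Y=1$, the sequence $\{w_n\}$ is bounded in $W^{1,p}(\Omega_\gamma)$. By Theorem~\ref{thmemb}, a diagonal extraction produces a subsequence $\{n_j\}$ for which $w_{n_j}\rightharpoonup w$ and $w_{n_j+1}\rightharpoonup w^{*}$ weakly in $W^{1,p}(\Omega_\gamma)$ and strongly in $L^2(\Omega_\gamma)$; both $w,w^{*}$ lie in $X\cap Y$ with unit $L^2$ norm. Passing to the limit in $\|w_{n_j+1}\|_X^p=\mu_{n_j}\int w_{n_j}w_{n_j+1}\,dx$, using $\|w_{n_j+1}\|_X^p\to\mu$ and $\mu_{n_j}\to\mu>0$, gives $\int w\,w^{*}\,dx=1$, and the Cauchy--Schwarz equality case then forces $w^{*}=w$. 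To upgrade to strong convergence in $X$, I test \eqref{its} at index $n_j$ with $v=w_{n_j+1}-w$: the right-hand side vanishes by $L^2$-strong convergence, while weak $W^{1,p}$-convergence drives $\int|\nabla w|^{p-2}\nabla w\cdot\nabla(w_{n_j+1}-w)\,dx$ to zero, leaving
\[
\int_{\Omega_\gamma}\bigl(|\nabla w_{n_j+1}|^{p-2}\nabla w_{n_j+1}-|\nabla w|^{p-2}\nabla w\bigr)\cdot\nabla(w_{n_j+1}-w)\,dx\to 0,
\]
which by the standard monotonicity inequality for the $p$-Laplacian upgrades $w_{n_j+1}\rightharpoonup w$ to strong convergence in $X$; the same argument works for $w_{n_j}$. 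Strong convergence lets me pass to the limit in \eqref{its} and deduce $\int|\nabla w|^{p-2}\nabla w\cdot\nabla v\,dx=\mu\int w v\,dx$ for all $v\in W^{1,p}(\Omega_\gamma)$, which together with $\|w\|_Y=1$ is exactly the eigenpair identity for \eqref{meqn} with $q=2$. The hardest step is the identification of the weak limits of $\{w_{n_j}\}$ and $\{w_{n_j+1}\}$: it is the sharp pairing $\|w_{n+1}\|_X^p=\mu_n\int w_nw_{n+1}\,dx$, combined with $\mu_n$ and $\|w_{n+1}\|_X^p$ sharing the same limit, that triggers the Cauchy--Schwarz equality case and closes the argument.
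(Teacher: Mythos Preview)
Your argument is correct and is precisely Ercole's inverse-iteration scheme, which is exactly what the paper invokes; the difference is one of presentation rather than method. The paper does not write out the iteration: it verifies that the operators $A$ and $B$ satisfy the abstract hypotheses $(H_1)$--$(H_5)$ (Lemma~\ref{auxlmab}), that they are continuous (Lemma~\ref{newlem}), and that $X$ is uniformly convex with compact embedding into $Y$, and then cites \cite[Theorem~1 and pp.~584--585]{Ercole} as a black box. You instead unpack that black box in concrete terms, which makes the mechanism visible---the sandwich $\|w_{n+1}\|_X^p\le\mu_n\le\|w_n\|_X^p$, the Cauchy--Schwarz equality case forcing $w=w^*$, and the $p$-Laplacian monotonicity upgrade to strong $X$-convergence. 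Your route buys self-containment; the paper's buys brevity and modularity.

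One small point worth tightening: when you say ``the same argument works for $w_{n_j}$'', the equation you need is \eqref{its} at index $n_j-1$, whose right-hand side involves $w_{n_j-1}$, for which you have not extracted a convergent subsequence. The argument still goes through because you only use the bound $\|w_{n_j-1}\|_Y=1$ to estimate $\bigl|\int_{\Omega_\gamma} w_{n_j-1}(w_{n_j}-w)\,dx\bigr|\le\|w_{n_j}-w\|_Y\to 0$, not convergence of $w_{n_j-1}$ itself; but this should be said explicitly. Alternatively, once $w_{n_j+1}\to w$ strongly in $X$ gives $\|w\|_X^p=\mu$, the uniform convexity of $X$ (which the paper records) combined with $w_{n_j}\rightharpoonup w$ and $\|w_{n_j}\|_X^p\to\mu$ yields $w_{n_j}\to w$ strongly without touching \eqref{its} at $n_j-1$.
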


\begin{theorem}\label{subopthm1}
Let $1<p<\gamma$ and $q=2$. Suppose $\{u_n\}_{n\in\mathbb{N}}\subset X\cap Y$ is a minimizing sequence for $\lambda$, that is $\|u_n\|_Y=1$ and $\|u_n\|_X^{p}\to\lambda$. Then there exists a subsequence $\{u_{n_j}\}_{j\in\mathbb{N}}$ which converges weakly in $X$ to $u\in X\cap Y$ such that
$
\lambda=\|u\|^p_{X}.
$
Moreover, $u$ is an eigenfunction of \eqref{maineqn} corresponding to $\lambda$ and its associated eigenfunctions are precisely the scalar multiple of those vectors at which $\lambda$ is reached.
\end{theorem}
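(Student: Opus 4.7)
The plan is to apply the direct method of the calculus of variations to extract a limit of the minimizing sequence, and then recover the Euler--Lagrange equation via Lagrange multipliers. Since $\|u_n\|_X^p\to\lambda$, the sequence $\{\nabla u_n\}$ is bounded in $L^p(\Omega_{\gamma})$; applying Lemma~\ref{lem1} with $q=2$ (whose zero-mean constraint is exactly the one defining $X$) gives $\|u_n\|_{L^p(\Omega_{\gamma})}\leq C\|\nabla u_n\|_{L^p(\Omega_{\gamma})}$, so $\{u_n\}$ is bounded in $W^{1,p}(\Omega_{\gamma})$. Since $1<2<p^{*}_{\gamma}$, the compact embedding in Theorem~\ref{thmemb} furnishes a subsequence $\{u_{n_j}\}$ with $u_{n_j}\rightharpoonup u$ weakly in $W^{1,p}(\Omega_{\gamma})$ (hence weakly in $X$) and $u_{n_j}\to u$ strongly in $L^2(\Omega_{\gamma})$. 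Strong $L^2$ convergence yields $\|u\|_Y=1$, while boundedness of $\Omega_{\gamma}$ together with $\int_{\Omega_{\gamma}} u_{n_j}\,dx=0$ gives $\int_{\Omega_{\gamma}}u\,dx=0$, so $u\in X\cap S_Y$. Weak lower semicontinuity of the $L^p$-norm of the gradient yields $\|u\|_X^p\leq\liminf_j\|u_{n_j}\|_X^p=\lambda$, while the definition of $\lambda$ forces $\|u\|_X^p\geq\lambda$; hence $\lambda=\|u\|_X^p$.

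To show that $u$ solves \eqref{meqn}, I would apply the Lagrange multiplier rule on $W^{1,p}(\Omega_{\gamma})$ to the $C^1$ functionals $J(v)=\|v\|_X^p$, $K(v)=\|v\|_Y^2-1$, and $L(v)=\int_{\Omega_{\gamma}}v\,dx$. The derivatives $K'(u)\phi=2\int_{\Omega_{\gamma}} u\phi\,dx$ and $L'(u)\phi=\int_{\Omega_{\gamma}}\phi\,dx$ are linearly independent at $u$: any nontrivial relation $aK'(u)+bL'(u)=0$ would force $u$ a.e.\ constant, which is excluded by $\int_{\Omega_{\gamma}} u\,dx=0$ and $\|u\|_Y=1$. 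Hence there exist $\mu,\eta\in\mathbb{R}$ with
\[
p\int_{\Omega_{\gamma}}|\nabla u|^{p-2}\nabla u\cdot\nabla\phi\,dx=2\mu\int_{\Omega_{\gamma}}u\phi\,dx+\eta\int_{\Omega_{\gamma}}\phi\,dx
\]
for every $\phi\in W^{1,p}(\Omega_{\gamma})$. Choosing $\phi\equiv 1$ gives $\eta=0$, while $\phi=u$ together with $\|u\|_Y=1$ gives $\mu=p\lambda/2$. Since $\|u\|_Y^{p-2}=1$, the identity is exactly the weak formulation \eqref{mwksol} with $q=2$, so $(\lambda,u)$ is an eigenpair.

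For the characterization of the eigenspace, let $v\in W^{1,p}(\Omega_{\gamma})\setminus\{0\}$ be any eigenfunction at $\lambda$. Testing \eqref{mwksol} against $\phi\equiv 1$, and using $\lambda>0$ from Theorem~\ref{minthm}, gives $\int_{\Omega_{\gamma}}v\,dx=0$, so $v\in X$; testing against $\phi=v$ yields $\|v\|_X^p=\lambda\|v\|_Y^p$, so $v/\|v\|_Y\in X\cap S_Y$ realises $\lambda$. Conversely, every minimizer is an eigenfunction by the previous step, and since both sides of \eqref{maineqn} transform by $c|c|^{p-2}$ under $v\mapsto cv$, every scalar multiple of a minimizer remains an eigenfunction. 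The step I expect to require the most care is the Lagrange multiplier argument --- namely, the $C^1$-differentiability of the $p$-gradient norm on $W^{1,p}(\Omega_{\gamma})$ when $p\neq 2$ and the linear independence of the two constraint derivatives --- but both reduce to routine observations once the mean-zero constraint is acknowledged.
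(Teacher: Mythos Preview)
Your argument is correct, but it follows a genuinely different route from the paper. The paper's proof of Theorem~\ref{subopthm1} is entirely by reduction to an abstract framework: after verifying in Lemmas~\ref{newlem} and~\ref{auxlmab} that the operators $A$ and $B$ satisfy the hypotheses $(H_1)$--$(H_5)$ of Ercole's inverse-iteration scheme, the paper simply invokes \cite[Proposition~2]{Ercole}. You instead run the direct method plus a Lagrange multiplier computation on $W^{1,p}(\Omega_{\gamma})$, which is more elementary and self-contained; it bypasses the abstract machinery at the cost of the (routine) verification of the Lagrange rule for the $p$-energy. The paper's route has the advantage that the same operator-theoretic setup simultaneously yields Theorem~\ref{newthm}(b), so the abstract framework amortises over both results; your route has the advantage of transparency and of making visible that nothing beyond the compact embedding of Theorem~\ref{thmemb} and weak lower semicontinuity is actually needed. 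One small simplification available to you: since $X$ is a closed linear subspace of $W^{1,p}(\Omega_{\gamma})$, you may minimise on $X$ with the single constraint $\|v\|_Y^2=1$, obtain the Euler--Lagrange identity for test functions in $X$, and then extend it to all of $W^{1,p}(\Omega_{\gamma})$ by noting that both sides vanish on constants (because $\int_{\Omega_{\gamma}} u\,dx=0$); this removes the need for the two-constraint independence check.
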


Moreover, we have the following regularity results.
\begin{theorem}\label{regthm1}
Let $1<p<\gamma$ and $q=2$. Assume that $\lambda>0$ is an eigenvalue of the problem \eqref{maineqn} and $u\in X\setminus\{0\}$ is a corresponding eigenfunction. Then \\
(i) $u\in L^\infty(\Omega_{\gamma})$. \\
(ii) Moreover, if $u\in X\setminus\{0\}$ is nonnegative in $\Omega_{\gamma}$, then $u>0$ in $\Omega_{\gamma}$. Further, for every $\omega\Subset \Omega_{\gamma}$ there exists a positive constant $c$ depending on $\omega$ such that $u\geq c>0$ in $\omega$.
\end{theorem}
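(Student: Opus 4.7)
The plan is to combine a Moser iteration with the Sobolev embedding from Theorem~\ref{thmemb} to obtain global $L^\infty$ bounds, and then to invoke the weak Harnack inequality for nonnegative $p$-supersolutions to establish strict positivity. Throughout, the fact that $q=2$ means the right-hand side is linear in $u$ with a fixed multiplicative constant.

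\textbf{Part (i): Boundedness via Moser iteration.} With $q=2$, set $M:=\lambda\,\|u\|_{L^{2}(\Omega_\gamma)}^{p-2}$, so that \eqref{mwksol} becomes
\[
\int_{\Omega_\gamma}|\nabla u|^{p-2}\nabla u\cdot\nabla\phi\,dx
= M\int_{\Omega_\gamma}u\,\phi\,dx
\quad\text{for all }\phi\in W^{1,p}(\Omega_\gamma).
\]
Fix $r$ with $p<r<p_\gamma^{*}$ and set $\chi:=r/p>1$. For a parameter $\beta\geq 1$ and a truncation level $k>0$, let $u_k:=\min(|u|,k)$ and choose the test function $\phi:=u\,u_k^{p(\beta-1)}\in W^{1,p}(\Omega_\gamma)$. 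Expanding $\nabla\phi$, discarding the nonnegative contribution supported on $\{|u|<k\}$, and applying the Sobolev embedding $W^{1,p}(\Omega_\gamma)\hookrightarrow L^{r}(\Omega_\gamma)$ of Theorem~\ref{thmemb} to the function $u\,u_k^{\beta-1}$ produces a recursive estimate of the shape
\[
\|u\,u_k^{\beta-1}\|_{L^{r}(\Omega_\gamma)}^{p}
\leq C\beta\,\|u\|_{L^{p\beta}(\Omega_\gamma)}^{p\beta},
\]
with $C=C(n,p,\gamma,M)$. Letting $k\to\infty$ by monotone convergence and then iterating $\beta_j:=\chi^{j}$ starting from the initial integrability $u\in L^{p\beta_0}(\Omega_\gamma)$ yields the standard Moser recursion
\[
\|u\|_{L^{p\beta_{j+1}}(\Omega_\gamma)}
\leq (C\beta_j)^{1/\beta_j}\|u\|_{L^{p\beta_j}(\Omega_\gamma)},
\]
whose infinite product $\prod_{j\geq 0}(C\chi^{j})^{1/\chi^{j}}$ is convergent since $\chi>1$. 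This gives $\|u\|_{L^{\infty}(\Omega_\gamma)}<\infty$, and I would follow Ercole \cite{Ercole} for the detailed bookkeeping.

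\textbf{Part (ii): Positivity via weak Harnack.} If $u\geq 0$ is a nontrivial eigenfunction, then $-\Delta_p u = Mu\geq 0$ in the weak sense, so $u$ is a nonnegative weak supersolution of the homogeneous $p$-Laplace equation on $\Omega_\gamma$. The classical weak Harnack inequality of Trudinger and Serrin for such supersolutions asserts that, for every ball $B_{2R}(x_0)\Subset\Omega_\gamma$, there exists $t>0$ with
\[
\left(\frac{1}{|B_R(x_0)|}\int_{B_R(x_0)}u^{t}\,dx\right)^{1/t}
\leq C\inf_{B_R(x_0)}u.
\]
Since $u\not\equiv 0$, there is at least one such ball on which the left-hand side is strictly positive, hence $\inf_{B_R(x_0)}u>0$ there. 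Connectedness of $\Omega_\gamma$ and a finite Harnack-chain argument, covering any compact set $\omega\Subset\Omega_\gamma$ by finitely many such balls, then propagate this strict positivity to a uniform lower bound $u\geq c>0$ on $\omega$; in particular $u>0$ throughout $\Omega_\gamma$.

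\textbf{Main obstacle.} The principal technical point is the Moser iteration in (i): one must justify the truncated test function at level $u_k$ before passing to $k\to\infty$, verify that the iteration exponents satisfy $p\beta_j\to\infty$ and that $\sum_j\log(C\beta_j)/\beta_j$ is summable, and ensure that the Sobolev constant from Theorem~\ref{thmemb} is finite and enters uniformly despite the cuspidal geometry of $\Omega_\gamma$. Once (i) is settled the positivity statement (ii) is essentially local and the boundary cusps play no role.
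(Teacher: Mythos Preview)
Your argument is essentially correct, but for part~(i) you take a different route from the paper. The paper uses the De~Giorgi/Stampacchia level-set method: it tests \eqref{mwksol} with $(u-k)^{+}$, obtains
\[
\int_{A(k)}|\nabla u|^{p}\,dx \le \lambda\int_{A(k)}|u|(u-k)\,dx,\qquad A(k)=\{u>k\},
\]
and then refers to \cite[Theorem~3.4]{GUsub} for the remaining decay estimate on $|A(k)|$ that forces $u\in L^{\infty}$. You instead run a Moser iteration with the power-type test functions $u\,u_k^{\,p(\beta-1)}$ and iterate $\beta_j=\chi^{j}$. Both are classical and both rely on the same compact embedding of Theorem~\ref{thmemb}; the De~Giorgi approach is slightly more robust with respect to the mismatch $q=2\neq p$ (your displayed recursion $\|u\|_{L^{p\beta_{j+1}}}\le (C\beta_j)^{1/\beta_j}\|u\|_{L^{p\beta_j}}$ is literally the $q=p$ form; for $q=2$ and $1<p<2$ the exponents shift by $2-p$ at each step, which still iterates to infinity but needs a word of justification). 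One small correction: the reference \cite{Ercole} concerns the abstract inverse-iteration scheme used in Theorems~\ref{newthm}--\ref{subopthm1}, not Moser bookkeeping, so it will not supply the details you defer to it.

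For part~(ii) your argument coincides with the paper's: both invoke Trudinger's weak Harnack inequality \cite{Tru} for nonnegative $p$-supersolutions, and your Harnack-chain explanation simply spells out what the paper leaves implicit in the citation.
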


\begin{remark}
Theorems~\ref{newthm}--\ref{regthm1} are correct in the case of Lipschitz domains $\Omega\subset\mathbb R^n$. In this case $p^{*}_{n}=np/(n-p)$, where $1<p<n$.
\end{remark}

\subsection{Operators associated to eigenvalue problems}

Let us denote by $X^*$ and $Y^*$ the dual of $X$ and $Y$ respectively.
Let us define the operator $A:X\to X^*$ by
\begin{equation}\label{ma}
\begin{split}
\langle A(v),w\rangle&=\int_{\Omega_{\gamma}}|\nabla v|^{p-2}\nabla v\nabla w\,dx,\quad \forall v,w\in X
\end{split}
\end{equation}
and $B:Y\to Y^*$ by
\begin{equation}\label{mb}
\begin{split}
\langle B(v),w\rangle&=\int_{\Omega_{\gamma}}v w\,dx,\quad \forall v,w\in Y.
\end{split}
\end{equation}
First we state some useful results. The following result from \cite[Theorem $9.14$]{var} will be useful for us.
\begin{theorem}\label{MBthm}
Let $V$ be a real separable reflexive Banach space and $V^{*}$ be the dual of $V$. Assume that $A:V\to V^{*}$ is a bounded, continuous, coercive and monotone operator. Then $A$ is surjective, i.e., given any $f\in V^{*}$, there exists $u\in V$ such that $A(u)=f$. If $A$ is strictly monotone, then $A$ is also injective.
\end{theorem}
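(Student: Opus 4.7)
The plan is to dispatch injectivity in one line and prove surjectivity by the classical Galerkin method combined with Minty's monotonicity trick. If $A$ is strictly monotone and $A(u_1)=A(u_2)$, pairing with $u_1-u_2$ gives $\langle A(u_1)-A(u_2),u_1-u_2\rangle=0$, and strict monotonicity forces $u_1=u_2$. The substance of the theorem is surjectivity, which I would prove in three steps.

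\emph{Galerkin approximation.} Fix $f\in V^{*}$. Separability of $V$ provides linearly independent vectors $\{e_k\}_{k\ge 1}$ whose linear span is dense; set $V_n=\mathrm{span}\{e_1,\dots,e_n\}$ and equip $V_n$ with an inner product, with induced isomorphism $J_n:V_n\to V_n^{*}$. Coercivity yields $\langle A(u)-f,u\rangle>0$ for every $u\in V_n$ of sufficiently large norm, so by a standard Brouwer-type argument (applied to $u\mapsto J_n^{-1}\bigl((A(u)-f)|_{V_n}\bigr)$) there exists $u_n\in V_n$ with $\langle A(u_n),v\rangle=\langle f,v\rangle$ for every $v\in V_n$. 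Testing $v=u_n$ and invoking coercivity once more provides a uniform bound $\|u_n\|_V\le C$, and reflexivity then yields a subsequence, still denoted $\{u_n\}$, with $u_n\rightharpoonup u$ in $V$.

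\emph{Passing to the limit via Minty.} For $m\le n$ and $w\in V_m$, monotonicity gives $\langle A(u_n)-A(w),u_n-w\rangle\ge 0$. Using $\langle A(u_n),u_n\rangle=\langle f,u_n\rangle$ and $\langle A(u_n),w\rangle=\langle f,w\rangle$ (valid once $n\ge m$) to expand, and taking $\liminf_{n\to\infty}$, I obtain $\langle f-A(w),u-w\rangle\ge 0$ for every $w\in\bigcup_m V_m$; density together with boundedness and continuity of $A$ extends this inequality to all $w\in V$. Now I set $w=u-t\varphi$ for arbitrary $\varphi\in V$ and $t>0$, divide by $t$, and send $t\to 0^{+}$; continuity of $A$ along the ray yields $\langle f-A(u),\varphi\rangle\ge 0$, and replacing $\varphi$ by $-\varphi$ produces the reverse inequality, so $A(u)=f$.

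The main obstacle is precisely the last step: $A$ is not assumed weak-to-weak sequentially continuous, so $A(u_n)\rightharpoonup A(u)$ cannot be concluded directly from $u_n\rightharpoonup u$. Monotonicity is the structural feature that converts weak convergence of the Galerkin sequence into the one-sided Minty inequality for the limit, and the affine perturbation $w=u-t\varphi$ together with continuity along line segments is what upgrades that one-sided bound to the equality $A(u)=f$.
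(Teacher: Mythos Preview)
The paper does not prove this theorem at all; it is quoted from \cite[Theorem~9.14]{var} and used as a black box to verify hypothesis $(H_5)$ in Lemma~\ref{auxlmab}. Your argument is correct and is exactly the classical Galerkin--Minty proof that appears in Ciarlet and in the standard monotone-operator literature, so you have in effect reproduced the proof behind the citation rather than diverged from it.
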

Moreover, for the following algebraic inequality, see \cite[Lemma $2.1$]{Dama}.
\begin{lemma}\label{alg}
Let $1<p<\infty$. Then for any $a,b\in\mathbb{R}^N$, there exists a positive constant $C=C(p)$ such that
\begin{equation}\label{algineq}
(|a|^{p-2}a-|b|^{p-2}b, a-b)\geq
C(|a|+|b|)^{p-2}|a-b|^2.
\end{equation}
\end{lemma}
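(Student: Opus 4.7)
The inequality \eqref{algineq} is the standard Simon-type monotonicity bound for the $p$-Laplace nonlinearity $f(x):=|x|^{p-2}x$, which is the gradient of the convex function $|x|^{p}/p$. My plan is to rewrite the left-hand side as the integral of the Hessian of $|x|^{p}/p$ along the segment from $b$ to $a$, bound the Hessian eigenvalues pointwise by $|\eta|^{p-2}$ up to a $p$-dependent constant, and then obtain a lower bound on $\int_{0}^{1}|\eta(t)|^{p-2}\,dt$ in terms of $(|a|+|b|)^{p-2}$, treating the ranges $p\ge2$ and $1<p<2$ separately.

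\emph{Step 1 (line-integral representation).} Write $\eta(t)=(1-t)b+ta$ for $t\in[0,1]$. Since $f=\nabla(|x|^{p}/p)$, the fundamental theorem of calculus gives
\[
\bigl\langle |a|^{p-2}a-|b|^{p-2}b,\,a-b\bigr\rangle=\int_{0}^{1}\bigl\langle Df(\eta(t))(a-b),\,a-b\bigr\rangle\,dt,
\]
where the integral is finite because the integrand carries at worst an integrable singularity like $|t-t_{0}|^{p-2}$ at any zero $t_{0}$ of $\eta$. Component-wise differentiation yields
\[
Df(x)=|x|^{p-2}\bigl(I+(p-2)\,\hat{x}\otimes\hat{x}\bigr),\qquad\hat{x}=x/|x|,
\]
whose eigenvalues are $|x|^{p-2}$ (multiplicity $N-1$) and $(p-1)|x|^{p-2}$. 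Hence
\[
\bigl\langle Df(\eta)(a-b),\,(a-b)\bigr\rangle\;\ge\;\min(1,p-1)\,|\eta|^{p-2}|a-b|^{2},
\]
and integrating in $t$ gives
\[
\bigl\langle |a|^{p-2}a-|b|^{p-2}b,\,a-b\bigr\rangle\;\ge\;\min(1,p-1)\,|a-b|^{2}\int_{0}^{1}|\eta(t)|^{p-2}\,dt.
\]

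\emph{Step 2 (lower bound for $\int_{0}^{1}|\eta(t)|^{p-2}\,dt$).} If $a=b=0$ then \eqref{algineq} is trivial, so assume $|a|+|b|>0$. If $1<p<2$, the exponent $p-2$ is negative, and the pointwise estimate $|\eta(t)|\le(1-t)|b|+t|a|\le|a|+|b|$ gives $|\eta(t)|^{p-2}\ge(|a|+|b|)^{p-2}$ a.e.; the integral is therefore at least $(|a|+|b|)^{p-2}$. If $p\ge2$, the integrand may vanish along the segment, so one must stay on an interval where $|\eta|$ is large. Assuming $|a|\ge|b|$ (otherwise swap), the reverse triangle inequality gives
\[
|\eta(t)|\;\ge\;t|a|-(1-t)|b|\;\ge\;(2t-1)|a|\;\ge\;|a|/2\qquad\text{for }t\in[3/4,1],
\]
hence
\[
\int_{0}^{1}|\eta(t)|^{p-2}\,dt\;\ge\;\tfrac{1}{4}(|a|/2)^{p-2}\;\ge\;c(p)(|a|+|b|)^{p-2},
\]
where the last step uses $|a|+|b|\le 2|a|$. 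Combining Steps 1 and 2 yields \eqref{algineq} with $C(p)=\min(1,p-1)\cdot c(p)>0$.

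\emph{Main obstacle.} The genuinely delicate point is the integral bound of Step 2: the correct pointwise estimate on $|\eta(t)|$ reverses direction with the sign of $p-2$. For $1<p<2$ the global upper bound $|\eta|\le|a|+|b|$ suffices (the possible singularity of $|\eta|^{p-2}$ at a zero of $\eta$ actually helps), while for $p\ge2$ the integrand can vanish on the segment and one must isolate a definite subinterval on which $|\eta|$ is comparable to $\max(|a|,|b|)$ via the reverse triangle inequality. Once this dichotomy is recognised, the verification of the $p$-dependent constants is routine.
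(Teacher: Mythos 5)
Your argument is correct. Note, however, that the paper does not prove this lemma at all: it is quoted verbatim from Damascelli \cite[Lemma 2.1]{Dama}, so there is no in-paper proof to compare against. What you have written is essentially the standard proof from the literature (and in substance the one in Damascelli's paper): represent the left-hand side as $\int_0^1\langle Df(\eta(t))(a-b),a-b\rangle\,dt$ with $f=\nabla(|x|^p/p)$, bound the Hessian from below by $\min(1,p-1)|\eta|^{p-2}I$, and then estimate $\int_0^1|\eta(t)|^{p-2}\,dt$ by $(|a|+|b|)^{p-2}$ using the triangle inequality for $1<p<2$ and the reverse triangle inequality on a fixed subinterval for $p\ge2$. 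All steps, including the integrability of the singularity of $Df$ along the segment when $1<p<2$ and the separate treatment of $a=b=0$, check out, so the proposal is a complete and correct proof.
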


Next, we prove the following result.
\begin{lemma}\label{newlem}
$(i)$ The operators $A$ defined by \eqref{ma} and $B$ defined by \eqref{mb} are continuous. $(ii)$ Moreover, $A$ is bounded, coercive and monotone.
\end{lemma}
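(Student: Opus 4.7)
The plan is to verify each of the four properties (continuity of $A$, continuity of $B$, boundedness of $A$, coercivity of $A$, monotonicity of $A$) via standard arguments, with Hölder's inequality and the algebraic inequality from Lemma~\ref{alg} doing all the heavy lifting. None of the five properties requires much more than linear or sublinear estimates, so the proof will be essentially a bookkeeping exercise.

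For $B$, continuity is immediate: $B$ is linear, and by Cauchy--Schwarz $|\langle B(v),w\rangle|\le \|v\|_Y\|w\|_Y$, so $B$ is a bounded linear operator between Hilbert spaces and thus continuous. For the boundedness of $A$, Hölder's inequality (with conjugate exponents $p/(p-1)$ and $p$) gives $|\langle A(v),w\rangle|\le \|\nabla v\|_{L^p(\Omega_\gamma)}^{p-1}\|\nabla w\|_{L^p(\Omega_\gamma)}=\|v\|_X^{p-1}\|w\|_X$, so $\|A(v)\|_{X^*}\le\|v\|_X^{p-1}$, which is bounded on bounded sets. Coercivity follows since $\langle A(v),v\rangle=\|v\|_X^p$, hence $\langle A(v),v\rangle/\|v\|_X=\|v\|_X^{p-1}\to\infty$ as $\|v\|_X\to\infty$. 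Monotonicity is a direct application of Lemma~\ref{alg}: integrating the pointwise inequality \eqref{algineq} with $a=\nabla v(x)$, $b=\nabla w(x)$ over $\Omega_\gamma$ yields
\[
\langle A(v)-A(w),v-w\rangle\ge C\int_{\Omega_\gamma}(|\nabla v|+|\nabla w|)^{p-2}|\nabla v-\nabla w|^2\,dx\ge 0.
\]

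The one step that requires genuine work is the continuity of $A$. Here the plan is: suppose $v_n\to v$ in $X$, so that $\nabla v_n\to\nabla v$ in $L^p(\Omega_\gamma;\mathbb R^n)$. Passing to a subsequence, $\nabla v_n\to\nabla v$ almost everywhere and is dominated by some $h\in L^p(\Omega_\gamma)$. Then $|\nabla v_n|^{p-2}\nabla v_n\to|\nabla v|^{p-2}\nabla v$ almost everywhere, and the dominating bound $||\nabla v_n|^{p-2}\nabla v_n|\le h^{p-1}\in L^{p'}(\Omega_\gamma)$ together with the dominated convergence theorem gives strong convergence in $L^{p'}(\Omega_\gamma;\mathbb R^n)$. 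Applying Hölder to
\[
\langle A(v_n)-A(v),w\rangle=\int_{\Omega_\gamma}\bigl(|\nabla v_n|^{p-2}\nabla v_n-|\nabla v|^{p-2}\nabla v\bigr)\cdot\nabla w\,dx
\]
yields $\|A(v_n)-A(v)\|_{X^*}\le \||\nabla v_n|^{p-2}\nabla v_n-|\nabla v|^{p-2}\nabla v\|_{L^{p'}(\Omega_\gamma)}\to 0$. A standard subsequence-of-subsequence argument upgrades this from the extracted subsequence to the full sequence.

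The main obstacle (really the only nontrivial point) is handling the nonlinear composition $\xi\mapsto|\xi|^{p-2}\xi$ when $1<p<2$, where the map is only Hölder continuous at the origin; the subsequence-plus-dominated-convergence scheme above is designed precisely to sidestep this and avoid any direct Lipschitz-type estimate on the duality map.
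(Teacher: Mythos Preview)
Your proof is correct and follows essentially the same approach as the paper: H\"older for boundedness, the identity $\langle A(v),v\rangle=\|v\|_X^p$ for coercivity, Lemma~\ref{alg} for monotonicity, and a subsequence argument for the continuity of $A$. The only minor difference is that for continuity you use dominated convergence to obtain \emph{strong} convergence of $|\nabla v_n|^{p-2}\nabla v_n$ in $L^{p'}$ (hence norm continuity of $A$), whereas the paper only argues weak $L^{p'}$-convergence from a.e.\ convergence plus boundedness (yielding demicontinuity); your version is slightly stronger but both suffice for the application to Theorem~\ref{MBthm}.
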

\begin{proof}
\noindent
$(i)$ \textbf{Continuity:} We only prove the continuity of $A$, since the continuity of $B$ would follow similarly. To this end, suppose $v_n\in X$ such that $v_n\to v$ in the norm of $X$. Thus, up to a subsequence $\nabla v_{n}\to \nabla v$ in $\Omega_{\gamma}$. We observe that
\begin{equation}\label{mfd}
\||\nabla v_{n}|^{p-1}\|_{L^\frac{p}{p-1}(\Omega_{\gamma})}\leq c,
\end{equation}
for some constant $c>0$, which is independent of $n$. Thus, up to a subsequence, we have
\begin{equation}\label{fc}
|\nabla v_{n}|^{p-2}\nabla v_{n}{\rightharpoonup} |\nabla v|^{p-1}\nabla v\text{ weakly in }L^{p'}(\Omega_{\gamma}).
\end{equation}
Since, the weak limit is independent of the choice of the subsequence, as a consequence of \eqref{fc}, we have
$$
\lim_{n\to\infty}\langle Av_n,w\rangle=\langle Av,w\rangle
$$
for every $w\in X$. Thus $A$ is continuous.
\vskip 0.2cm
\noindent
$(ii)$ \textbf{Boundedness:} Using the estimate \eqref{mest}, we have
$$
\|Av\|_{X^*}=\sup_{\|w\|_X\leq 1}|\langle Av,w\rangle|\leq\|v\|_X^{p-1}\|w\|_X\leq\|v\|^{p-1}_X.
$$
Thus, $A$ is bounded.

\noindent
\textbf{Coercivity:}  We observe that
$$
\langle Av,v\rangle=\|v\|_{X}^p.
$$
Since $p>1$, we have $A$ is coercive.

\noindent
\textbf{Monotonicity:} Using Lemma \ref{alg}, it follows that there exists a constant $C=C(p)>0$ such that
for every $v,w\in X$, we have
\begin{multline*}
\langle Av-Aw,v-w\rangle=\int_{\Omega_{\gamma}}(|\nabla\,v|^{p-2}\nabla\,v-|\nabla_{\textrm{H}}\,w|^{p-2}\nabla\,w,\nabla\,(v-w))\,dx\\
=
\int_{\Omega_{\gamma}}(|\nabla\,v|^{p-2}\nabla\,v-|\nabla\,w|^{p-2}\nabla\,w,\nabla\,v-\nabla\,w)\,dx
\\
 \geq C(p)\int_{\Omega_{\gamma}}(|\nabla\,v|+|\nabla\,w|)^{p-2}|\nabla\,v-\nabla\,w|^2\,dx
\geq 0.
\end{multline*}
Thus, $A$ is a monotone operator.
\end{proof}

\begin{lemma}\label{auxlmab}
The operators $A$ defined by \eqref{ma} and $B$ defined by \eqref{mb} satisfy the following properties:
\vskip 0.2cm
\noindent
$(H_1)$ $A(tv)=|t|^{p-2}tA(v)\quad\forall t\in\R\quad \text{and}\quad\forall v\in X$.
\vskip 0.2cm

\noindent
$(H_2)$ $B(tv)=tB(v)\quad\forall t\in\R\quad \text{and}\quad\forall v\in Y$.
\vskip 0.2cm

\noindent
$(H_3)$ $\langle A(v),w\rangle\leq\|v\|_X^{p-1}\|w\|_X$ for all $v,w\in X$, where the equality holds if and only if $v=0$ or $w=0$ or $v=t w$ for some $t>0$.
\vskip 0.2cm

\noindent
$(H_4)$ $\langle B(v),w\rangle\leq\|v\|_Y^{p-1}\|w\|_Y$ for all $v,w\in Y$, where the equality holds if and only if $v=0$ or $w=0$ or $v=t w$ for some $t\geq 0$.
\vskip 0.2cm

\noindent
$(H_5)$ For every $w\in Y\setminus\{0\}$ there exists $u\in X\setminus\{0\}$ such that
$$
\langle A(u),v\rangle=\langle B(w),v\rangle\quad\forall\quad v\in X.
$$
\end{lemma}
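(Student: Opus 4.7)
My plan is to dispatch $(H_1)$--$(H_4)$ by direct H\"older--type calculations and to reduce $(H_5)$ to the Minty--Browder surjectivity theorem. For $(H_1)$, substituting $tv$ into the integrand gives $|\nabla(tv)|^{p-2}\nabla(tv)=|t|^{p-2}t\,|\nabla v|^{p-2}\nabla v$, and pulling the scalar out of the integral yields $A(tv)=|t|^{p-2}tA(v)$; for $(H_2)$, $B$ is linear in its argument, so $B(tv)=tB(v)$ is immediate from the linearity of the integral.

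For $(H_3)$, I combine the pointwise Cauchy--Schwarz inequality in $\R^n$, $|\nabla v|^{p-2}\nabla v\cdot\nabla w\leq|\nabla v|^{p-1}|\nabla w|$, with H\"older's inequality on the conjugate exponents $p/(p-1)$ and $p$ to obtain $\langle A(v),w\rangle\leq\|v\|_X^{p-1}\|w\|_X$. Equality in the pointwise step forces $\nabla w$ to be a nonnegative scalar multiple of $\nabla v$ a.e., and equality in H\"older forces $|\nabla v|^p$ and $|\nabla w|^p$ to be proportional a.e. Together these give $\nabla w=c\,\nabla v$ a.e.\ with $c\geq 0$; integrating and using that elements of $X$ have zero mean yields $w=cv$, which corresponds to the stated trichotomy (with $t=1/c$ if $c>0$). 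For $(H_4)$ I would apply the ordinary Cauchy--Schwarz inequality on $L^2(\Omega_{\gamma})$ together with its standard equality case, reading the exponent $p-1$ in the statement as the natural descendent of the Ercole--type $(p,q)$ bound, which here agrees with Cauchy--Schwarz since $q=2$.

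The main step is $(H_5)$. By Lemma~\ref{newlem} the operator $A\colon X\to X^*$ is bounded, continuous, coercive, and monotone on the separable reflexive Banach space $X$ (a closed subspace of $W^{1,p}(\Omega_{\gamma})$). Lemma~\ref{alg} in fact yields strict monotonicity: if $\langle A v-A w,v-w\rangle=0$ then $\nabla v=\nabla w$ a.e., and the zero-mean constraint on $X$ promotes this to $v=w$. Given $w\in Y\setminus\{0\}$, the functional $f_w(v):=\int_{\Omega_{\gamma}}wv\,dx$ is continuous on $X$ by Cauchy--Schwarz combined with the continuous embedding $X\hookrightarrow L^2(\Omega_{\gamma})$ from Theorem~\ref{thmemb}, so $f_w\in X^*$. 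Theorem~\ref{MBthm} then produces a unique $u\in X$ with $A(u)=f_w$, which reads as $\langle A(u),v\rangle=\langle B(w),v\rangle$ for every $v\in X$. The delicate point, and the one I flag as the main obstacle, is to rule out $u=0$: if $u=0$ then $f_w\equiv 0$ on $X$, so $w$ is $L^2$-orthogonal to every zero-mean function and hence is a.e.\ constant on $\Omega_{\gamma}$; under the zero-mean reading of $w$ that is implicit when $(H_5)$ is invoked in the iteration of Theorem~\ref{newthm}, this constant must vanish, contradicting $w\neq 0$, and hence $u\neq 0$ as required.
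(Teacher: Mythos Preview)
Your argument is correct and matches the paper's approach: direct substitution for $(H_1)$--$(H_2)$, pointwise Cauchy--Schwarz followed by H\"older for $(H_3)$ (with $(H_4)$ handled analogously), and the Minty--Browder surjectivity theorem (Theorem~\ref{MBthm}) via Lemma~\ref{newlem} and the embedding of $X$ into $Y$ for $(H_5)$. Your explicit discussion of why $u\neq 0$ in $(H_5)$ and of the exponent anomaly in $(H_4)$ in fact goes slightly beyond the paper, which simply asserts $u\in X\setminus\{0\}$ without comment and says $(H_4)$ ``can be verified similarly.''
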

\begin{proof}
\vskip 0.2cm

\noindent
$(H_1)$ Follows by the definition of $A$ in \eqref{ma}.

\noindent
$(H_2)$  Follows by the definition of $B$ in \eqref{mb}.
\vskip 0.2cm

\noindent
$(H_3)$ First using Cauchy-Schwartz inequality and then by H\"older's inequality with exponents $\frac{p}{p-1}$ and $p$, for every $v,w\in X$, we obtain
\begin{equation}\label{mest}
\begin{split}
\langle Av,w\rangle&=\int_{\Omega_{\gamma}}|\nabla v|^{p-2}\nabla v\nabla w\,dx\leq\int_{\Omega_{\gamma}}|\nabla v|^{p-1}|\nabla w|\,dx\\
&\leq\Big(\int_{\Omega_{\gamma}}|\nabla v|^p\,dx\Big)^\frac{p-1}{p}\Big(\int_{\Omega_{\gamma}}|\nabla w|^p\,dx\Big)^\frac{1}{p}=\|v\|_X^{p-1}\|w\|_X.
\end{split}
\end{equation}
Let the equality
\begin{equation}\label{mequal}
\langle A(v),w\rangle=\|v\|_X^{p-1}\|w\|_X
\end{equation}
holds for every $v,w\in X$. We claim that either $v=0$ or $w=0$ or $v=tw$ for some constant $t>0$. Indeed, if $v=0$ or $w=0$, this is trivial. Therefore, we assume $v\neq 0$ and $w\neq 0$ and prove that $v=tw$ for some constant $t>0$. By the estimate \eqref{mest} if the equality \eqref{mequal} holds, then we have
\begin{equation}\label{mequal1}
\langle A(v), w\rangle=\int_{\Omega_{\gamma}}|\nabla v|^{p-1}|\nabla w|\,dx,
\end{equation}
which gives  us
\begin{equation}\label{mCS}
\int_{\Omega_{\gamma}}f(x)\,dx=0,
\end{equation}
where
$$
f(x)=|\nabla v|^{p-1}|\nabla w|-|\nabla v|^{p-2}\nabla v\nabla w.
$$
By Cauchy-Schwartz inequality, we have $f\geq 0$ in $\Omega_{\gamma}$. Hence using this fact in \eqref{mCS}, we have $f=0$ in $\Omega_{\gamma}$, which reduces to
\begin{equation}\label{mfCS}
|\nabla v|^{p-1}\nabla w=|\nabla v|^{p-2}\nabla v\nabla w\text{ in }\Omega_{\gamma},
\end{equation}
which gives $\nabla{v}(x)=c(x)\nabla w(x)$ for some $c(x)\geq 0$.

On the other hand, if the equality \eqref{mequal} holds, then by the estimate \eqref{mest} we have
\begin{equation}\label{mequal2}
\begin{split}
f_1=f_2,
\end{split}
\end{equation}
where
$$
f_1=\int_{\Omega_{\gamma}}|\nabla{v}|^{p-1}\nabla w\,dx,\quad
f_2=\left(\int_{\Omega_{\gamma}}|\nabla{v}|^p\,dx\right)^\frac{p-1}{p}\left(\int_{\Omega_{\gamma}}|\nabla w|^p\,dx\right)^\frac{1}{p}.
$$
Thus, we have $|\nabla{v}|(x)=d|\nabla w|(x)$ a.e. $x\in \Omega_{\gamma}$ for some constant $d>0$. Thus, we obtain $c(x)=d$. As a consequence, we get $\nabla{v}=d\nabla w$ a.e. in $\Omega_{\gamma}$ and therefore, we deduce that $\|v-dw\|_X=0$, which gives $v=dw$ a.e. in $\Omega_{\gamma}$. Hence, the property $(H3)$ is verified.
\vskip 0.2cm

\noindent
$(H_4)$ The hypothesis $(H_4)$ can be verified similarly.

\vskip 0.2cm

\noindent
$(H_5)$ We observe that $X$ is a separable and reflexive Banach space. By Lemma \ref{newlem}, the operator $A:X\to X^*$ is bounded, continuous, coercive and monotone.

By the Sobolev embedding theorem, we have $X$ is continuously embedded in $Y$. Therefore, $B(w)\in X^*$ for every $w\in Y\setminus\{0\}$.

Hence, by Theorem \ref{MBthm}, for every $w\in Y\setminus\{0\}$, there exists $u\in X\setminus\{0\}$ such that
$$
\langle A(u),v\rangle=\langle B(w),v\rangle\quad\forall v\in X.
$$
Hence the property $(H_5)$ holds. This completes the proof.
\end{proof}

\subsection{Proof of the regularity results:}
\vskip 0.2cm
\noindent
\textbf{Proof of Theorem \ref{newthm}:}
\vskip 0.2cm
\noindent
$(a)$ First we recall the definition of the operators $A:X\to X^*$ from \eqref{ma} and $B:Y\to Y^*$ from \eqref{mb} respectively. Then, noting the property $(H_5)$ from Lemma \ref{auxlmab} and proceeding along the lines of the proof in \cite[page $579$ and pages $584-585$]{Ercole}, the result follows.
\vskip 0.2cm
\noindent
$(b)$ We observe that $X$ is uniformly convex Banach space and by the Sobolev embedding theorem \cite{GG94,GU09}, $X$ is compactly embedded in $Y$. Next, using Lemma \ref{newlem}-$(i)$, the operators $A:X\to X^*$ and $B:Y\to Y^*$ are continuous and by Lemma \ref{auxlmab}, the properties $(H_1)-(H_5)$ holds. Noting these facts, the result follows from \cite[page $579$, Theorem 1]{Ercole}. \qed
\vskip 0.2cm
\noindent
\textbf{Proof of Theorem \ref{subopthm1}:} The proof follows due to the same reasoning as in the proof of Theorem \ref{newthm}-$(b)$ except that here we apply \cite[page $583$, Proposition $2$]{Ercole} in place of \cite[page $579$, Theorem 1]{Ercole}.
\vskip 0.2cm
\noindent
\textbf{Proof of Theorem \ref{regthm1}:}
\vskip 0.2cm
\noindent
$(i)$ Due to the homogeneity of the equation \eqref{maineqn}, without loss of generality, we assume that $\|u\|_{Y}=1$. Let $k\geq 1$ and set $A(k):=\{x\in \Omega_{\gamma}:u(x)>k\}$. Choosing $v=(u-k)^+$ as a test function in \eqref{mwksol}, we obtain
\begin{multline}\label{regtst2}
\int_{A(k)}|\nabla u|^p\,dx=\lambda\int_{A(k)}u(u-k)\,dx \\
\leq\lambda\int_{A(k)}|u|(u-k)\,dx\leq \lambda\int_{A(k)}|u|(u-k)\,dx.
\end{multline}
Noting \eqref{regtst2} and proceeding along the lines of the proof of \cite[Theorem 3.4, Pages 7--8]{GUsub}, the result follows.

\vskip 0.2cm
\noindent
$(ii)$ By \cite[Theorem $1.2$]{Tru}, the result follows.

\vskip 0.2cm
\noindent
\textbf{Acknowledgements.}

\noindent
The authors thank Pier Domenico Lamberti  for very useful and fruitful discussions and remarks on the topic.

\noindent
The second author was supported by RSF Grant No. 23-21-00080.

\vskip 0.2cm
\noindent
\textbf{Data availability statements.}
\noindent
Data sharing not applicable to this article as no datasets were generated or analysed during the current study.

\vskip 0.2cm

\vskip 0.3cm

Department of Mathematical Sciences, Indian Institute of Science Education and Research Berhampur, Berhampur, Odisha 760010, India

Department of Mathematics, Indian Institute of Technology Indore, Khandwa Road, Simrol, Indore 453552, India

\emph{E-mail address:} \email{pgarain92@gmail.com} \\

 Regional Scientific and Educational Mathematical Center, Tomsk State University, 634050 Tomsk, Lenin Ave. 36, Russia
							
\emph{E-mail address:} \email{va-pchelintsev@yandex.ru}   \\
			
Department of Mathematics, Ben-Gurion University of the Negev, P.O.Box 653, Beer Sheva, 8410501, Israel
							
\emph{E-mail address:} \email{ukhlov@math.bgu.ac.il}


\begin{thebibliography}{99}

\bibitem{AnT} A.~Anane, N.~Tsouli, On the second eigenvalue of the p-Laplacian, In Nonlinear partial differential
equations (F\'es, 1994), volume 343 of Pitman Res. Notes Math. Ser., pages 1--9. Longman, Harlow, 1996.

\bibitem{An} A.~Anane, Simplicit\'e et isolation de la premi\'ere valeur propre du p-laplacien avec poids, C. R. Acad.
Sci. Paris S\'er. I Math., 305(16) (1987), 725--728.

\bibitem{var} P.~G.~Ciarlet, Linear and nonlinear functional analysis with applications, Society for Industrial and
Applied Mathematics, Philadelphia, PA, 2013.

\bibitem{Croce}
G.~Croce, A.~Henrot, G.~Pisante, Corrigendum to "An isoperimetric inequality for a nonlinear eigenvalue problem" [Ann. I. H. Poincar\'e - AN 29(1) (2012) 21--34]. Ann. Inst.
H. Poincar\'e Anal. Non Lin\'eaire, 32(2) (2015), 485--487.

\bibitem{Dama}
L.~Damascelli, Comparison theorems for some quasilinear degenerate elliptic operators and applications
to symmetry and monotonicity results, Ann. Inst. H. Poincar\'e Anal. Non Lin\'eaire, 15(4) (1998), 493--516.

\bibitem{Drabek}
P.~Dr\'abek, A.~Kufner, F.~Nicolosi, Quasilinear elliptic equations with degenerations and
singularities, volume 5 of De Gruyter Series in Nonlinear Analysis and Applications. Walter de Gruyter \&
Co., Berlin, 1997.

\bibitem{Ercole}
G.~Ercole, Solving an abstract nonlinear eigenvalue problem by the inverse iteration method, Bull. Braz.
Math. Soc. (N.S.), 49(3) (2018), 577--591.

\bibitem{Fe69} H.~Federer, Geometric measure theory, Sp\-rin\-ger Verlag, Berlin, (1969).

\bibitem{Franzina}
G.~Franzina, P.~D.~Lamberti, Existence and uniqueness for a $p$-Laplacian nonlinear
eigenvalue problem, Electron. J. Differential Equations, 10 (2010), 10pp.

\bibitem{GUsub}
P.~Garain, A.~Ukhlov, On ($p,q$)-eigenvalues of subelliptic operators on nilpotent Lie groups.
Trans. A. Razmadze Math. Inst., 176 (2022), 207--216.

\bibitem{GPe}
J.~P.~Garc\'{\i}a Azorero and I.~Peral Alonso. Existence and nonuniqueness for the $p$-Laplacian: nonlinear
eigenvalues. Comm. Partial Differential Equations, 12(12) (1987), 1389--1430.

\bibitem{GG94} V.~Gol'dshtein, L.~Gurov, Applications of change of variables operators for exact embedding theorems,
Integral Equ. Oper. Theory, 19 (1994), 1--24.

\bibitem{GPU18}
V.~Gol'dshtein, V.~Pchelintsev, A.~Ukhlov, On the first eigenvalue of the degenerate p-Laplace operator in non-convex domains, Integral Equ. Oper. Theory, 90 (2018), 21 pp.

\bibitem{GU09} V.~Gol'dshtein, A.~Ukhlov, Weighted Sobolev spaces and embedding theorems, Trans. Amer. Math. Soc., 361(7) (2009), 3829--3850.

\bibitem{GU16} V.~Gol'dshtein, A.~Ukhlov, On the first Eigenvalues of Free Vibrating Membranes in Conformal Regular Domains.
Arch. Rational Mech. Anal. 221(2) (2016), 893--915.

\bibitem{GU17}
V.~Gol'dshtein, A.~Ukhlov, The spectral estimates for the Neumann-Laplace operator in space domains.
Adv. in Math., 315 (2017), 166--193.

\bibitem{GU19}
V.~Gol'dshtein, A.~Ukhlov, Composition Operators on Sobolev Spaces and Neumann Eigenvalues.
Complex Anal. Oper. Theory, 13 (2019), 2781--2798.

\bibitem{Greco}
A.~Greco, M.~Lucia, Laplacian eigenvalues for mean zero functions with constant Dirichlet data. Forum Math., 20(5) (2008), 763--782.

\bibitem{HKM} J.~Heinonen, T.~Kilpelinen, O.~Martio, Nonlinear potential theory of degenerate elliptic equations, Clarendon Press. Oxford,
New York, Tokio (1993).

\bibitem{ENT} L.~Esposito, C.~Nitsch, C.~Trombetti, Best constants in Poincar\'e inequalities
for convex domains, J. Convex Anal., 20 (2013), 253--264.

\bibitem{Anle} An L\^{e}, Eigenvalue problems for the $p$-Laplacian, Nonlinear Anal., 64(5) (2006), 1057--1099.

\bibitem{Lind92} P.~Lindqvist, Addendum: "On the equation $\text{div}(|\nabla u|^{p-2} \nabla u)+\lambda |u|^{p-2}u=0$", [Proc. Amer. Math. Soc.
109(1) (1990), 157--164], Proc. Amer. Math. Soc., 116(2) (1992), 583--584.

\bibitem{M} V.~Maz'ya, Sobolev Spaces with applications to elliptic partial differential equations, Springer, Berlin, 2011.

\bibitem{MH72} V.~G.~Maz'ya, V.~P.~Havin, Non-linear potential theory, Russian Math. Surveys, 27 (1972), 71--148.

\bibitem{Nazarov} A.~I.~Nazarov, On symmetry and asymmetry in a problem of shape optimization, arXiv:1208.3640.

\bibitem{PW} L.~E.~Payne, H.~F.~Weinberger, An optimal Poincar\'e inequality for convex domains,
Arch. Rat. Mech. Anal., 5 (1960), 286--292.

\bibitem{PS51} G.~P\'olya, G. Szeg\"o, Isoperimetric Inequalities in Mathematical Physics, Princeton University Press, 1951.

\bibitem{Tru} N.~S.~Trudinger, On Harnack type inequalities and their application to quasilinear elliptic equations,
Comm. Pure Appl. Math., 20 (1967), 721--747.

\bibitem{U93}
A.~Ukhlov, On mappings, which induce embeddings of Sobolev spaces, Siberian Math. J. {34} (1993), 185--192.

\bibitem{VGR}
S.~K.~Vodop'yanov, V.~M.~Gol'dstein, Yu.~G.~Reshetnyak, On geometric properties of functions with generalized first derivatives,
Uspekhi Mat. Nauk, 34 (1979), 17--65.

\bibitem{VU98}
S.~K.~Vodop'yanov, A.~D.~Ukhlov, Sobolev spaces and $(P,Q)$-quasiconformal mappings
of Carnot groups, Siberian Math. J. 39 (1998), 665--682.

\bibitem{VU02}
S.~K.~Vodop'yanov, A.~D.~Ukhlov, Superposition operators in Sobolev spaces, Russian Mathematics: Izvestiya VUZ {46} (2002), 11--33.

\bibitem{VU04}
S.~K.~Vodop'yanov, A.~D.~Ukhlov, Set functions and its applications in the theory of
Lebesgue and Sobolev spaces. Siberian Adv. Math., 14 (2004), 1--48.

\bibitem{VU05} S.~K.~Vodop'yanov, A.~D.~Ukhlov, Set functions and their applications in the theory of Lebesgue and Sobolev spaces,
Siberian Adv. in Math, 15 (2005), 91--125.


\end{thebibliography}
\end{document}